\numberwithin{equation}{section}
\theoremstyle{plain}
\newtheorem{theorem}{Theorem}[section]
\newtheorem{lemma}[theorem]{Lemma}
\newtheorem{corollary}[theorem]{Corollary}
\theoremstyle{definition}
\newtheorem{case[theorem]}{Case}
\theoremstyle{remark}
\numberwithin{equation}{section}
\def\dH{\dim_{{\mathcal H}}}
\def\R{\Bbb R}
\def\e{\epsilon}
\begin{document}

\title{Group actions, the Mattila integral and applications} 


\author{Bochen Liu}

\date{today}

\keywords{}

\email{bochen.liu1989@gmail.com}
\address{Department of Mathematics, Bar-Ilan University, Ramat Gan, Israel}


\maketitle
\begin{abstract}
The Mattila integral, 
$$ {\mathcal M}(\mu)=\int {\left( \int_{S^{d-1}} {|\widehat{\mu}(r \omega)|}^2 d\omega \right)}^2 r^{d-1} dr,$$
developed by Mattila, is the main tool in the study of the Falconer distance problem. In this paper, with a very simple argument, we develop a generalized version of the Mattila integral. Our first application is to consider the product of distances $$(\Delta(E))^k= \left\{\prod_{j=1}^k |x^j-y^j|: x^j, y^j\in E\right\} $$
and show that when $d\geq 2$,
$(\Delta(E))^k$ has positive Lebesgue measure if $\dH(E)>\frac{d}{2}+\frac{1}{4k-1}$. Another application is, we prove for any $E,F,H\subset\R^2$, $\dH(E)+\dH(F)+\dH(H)>4$, the set
$$E\cdot(F+H)=\{x\cdot(y+z): x\in E, y\in F, z\in H\}$$
has positive Lebesgue.  

\end{abstract}
\section{Introduction}
\subsection{Erd\H{o}s-Falconer problems}
One of the most interesting and far reaching problems of modern geometric measure theory is the Falconer distance problem, which asks how large the Hausdorff dimension of a compact set $E \subset {\Bbb R}^d$, $d \ge 2$, (henceforth denoted by $\dH(E)$) needs to be in order for the set of distances 
$$ \Delta(E)=\{|x-y|: x,y \in E \}$$ to have positive Lebesgue measure. The best currently known results are due to Wolff  (\cite{Wol99}) in two dimensions and Erdogan (\cite{Erd05}) in higher dimensions. They proved that $|\Delta(E)|>0$ if the Hausdorff dimension of $E$ is greater than $\frac{d}{2}+\frac{1}{3}$. On the other hand, Bourgain (\cite{Bou03}) showed that for planar sets $E\subset\R^2$, $\dH(\Delta(E))>\frac{1}{2}+c_0$ for some absolute constant $c_0>0$ whenever $\dH(E)\geq 1$. The conjectured exponent is $\frac{d}{2}$ and it was shown by Falconer in \cite{Fal85} that this exponent would be best possible.

Both Wolff and Erdogan used the paradigm to attack the Falconer distance problem invented by Mattila in \cite{Mat87}. That is, to show $\Delta(E)$ has positive Lebesgue measure, it suffices to show that there exists a Borel measure $\mu$ on $E$ such that
\begin{equation} \label{mattilaintegral} {\mathcal M}(\mu)=\int {\left( \int_{S^{d-1}} {|\widehat{\mu}(r \omega)|}^2 d\omega \right)}^2 r^{d-1} dr<\infty.\end{equation} We call $\mathcal M(\mu)$ the Mattila integral of $\mu$. While ${\mathcal M}(\mu)$ and its connection with the Falconer problem can be derived directly, as is done in \cite{Mat87} and \cite{Wol99}, authors in \cite{GILP15} take the following geometric point of view that has been proved so useful in the solution of the Erd\H{o}s distance conjecture in the plane by Guth and Katz (\cite{GK15}).

Notice that $|x-y|=|x'-y'|$ if and only if there exists $\theta\in O(d)$ such that $x-\theta x'=y-\theta y'$. So the orthogonal group $O(d)$ is the invariant group of the distance problem. The key observation in \cite{GILP15} is, the Mattila integral can be written as an integral with the Haar measure $\lambda_{O(d)}$ on $O(d)$ involved, i.e.
$$\int_0^\infty \left(\int_{S^{d-1}}|\hat{\mu}(r\omega)|^2\,d\omega\right)^2\,r^{d-1}\,dr= c_d\int |\hat{\mu}(\xi)|^2\left(\int_{O(d)}|\hat{\mu}(\theta\xi)|^2\,d\lambda_{O(d)}(\theta)\right)\,d\xi. $$
With this new observation of the Mattila integral, author in \cite{GILP15} obtained a generalized version of the Mattila integral to study when the set of $k$-simplices,
$$T_k(E)=\{\left(\dots,|x^i-x^j|,\dots \right)\in \R^{k+1 \choose 2}: x^i\in E\subset\R^d, 1\leq i<j\leq k+1\}$$
has positive Lebesgue measure.


Although different derivations of the Mattila integral \eqref{mattilaintegral} are given, none of them is easy to generalize. Proofs in \cite{Mat87}, \cite{Wol03} use the asymptotic expansion of Bessel functions, while the proof in \cite{GILP15} relies on counting co-dimension of intersections of sub-manifolds in $O(d)$. In this paper, we give a much simpler argument to develop a more general result.
\vskip.125in
Suppose $E_1,\dots,E_{k+1}\subset\R^d$ are compact sets and $\Phi:\R^{d(k+1)}\rightarrow \R^m$ is a Lipschitz map whose $m$-dimensional Jacobian $J_m\Phi$ never vanishes on $E_1\times\cdots\times E_{k+1}$. Suppose there exists a locally compact topological group $G$ acting continuously on $\R^d$ such that $$\Phi(x^1,x^2,\dots,x^{k+1})=\Phi(y^1,y^2,\dots,y^{k+1})$$ if and only if $$(y^1,y^2,\dots,y^{k+1})=(gx^1,gx^2,\dots,gx^{k+1})$$ for some $g\in G$. Denote
$$\Delta_\Phi(E_1,\dots,E_{k+1}):= \{\Phi(x^1,\dots,x^{k+1}):x^j\in E_j \}.$$
We shall investigate how large the Hausdorff dimension of $E_j$ needs to be to ensure that $\Delta_\Phi(E_1,\dots,E_{k+1})$ has positive $m$-dimensional Lebesgue measure.
\vskip.125in
We need more notations. Let $\phi\subset C_0^\infty$, $\int\phi=1$ and $\phi^\epsilon=\frac{1}{\epsilon^d}\phi(\frac{\cdot}{\epsilon})$. Suppose $\mu_j$ is a probability measure on $E_j$ and denote $\mu_j^\epsilon = \mu_j*\phi^\epsilon\in C_0^\infty(\R^d)$. Then one can define a probability measure $\nu^\epsilon$ on the $\epsilon$-neighborhood of $\Delta_\Phi(E_1,\dots,E_{k+1})\subset\R^m$ by
\begin{equation}\label{defmeasure} \int f(\vec{t})\, d\nu^\epsilon(\vec{t})=\int_{\R^{d(k+1)}} f(\Phi(x^1,\dots,x^{k+1})) \,\mu_1^\epsilon(x^1)\cdots \mu_{k+1}^\epsilon(x^{k+1})\,dx^1\cdots dx^{k+1}.
\end{equation}
In other words, $\nu^\epsilon$ is the push-forward of $\mu_1^\epsilon\times\cdots\times\mu_{k+1}^\epsilon$ under $\Phi$. Our main theorem is the following.

\begin{theorem}
	\label{main}
	Suppose $\lambda_G$ is a right Haar measure on $G$. With notations above,
	\begin{equation}\label{gengenMat}
		\int |\widehat{\nu^\epsilon}(\xi)|^2\,d\xi \approx \int_{G}\prod_{j=1}^{k+1}\left(\int_{\R^d} \mu_j^\e(x)\,\mu_j^\e(gx)\,dx\right) d\lambda_G(g),
	\end{equation}
where the implicit constant is independent in $\epsilon$. Moreover, if the right hand side is bounded above uniformly in $\epsilon$, the set $\Delta_\Phi(E_1,\dots,E_{k+1})\subset\R^m$ has positive Lebesgue measure.
\end{theorem}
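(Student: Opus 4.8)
The plan is to obtain the identity \eqref{gengenMat} by a direct computation — expand $\int|\widehat{\nu^\epsilon}|^2$ by Plancherel, integrate out the resulting delta function over the level sets of $\Phi$ using the co-area formula, identify each level set with a $G$-orbit, and use the \emph{right}-invariance of $\lambda_G$ to rewrite the surface measure on that orbit as $\lambda_G$ times a weight depending only on the orbit point — and then to read off the positivity statement from a soft weak-compactness argument.

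Concretely, I would first note that since $J_m\Phi$ is continuous and bounded away from $0$ on a neighbourhood $U$ of $E_1\times\dots\times E_{k+1}$, for small $\epsilon$ the map $\Phi$ is a submersion on $\mathrm{supp}\,\mu_1^\e\times\dots\times\mathrm{supp}\,\mu_{k+1}^\e\subset U$, so a co-area estimate shows $\nu^\epsilon$ is a compactly supported square-integrable function and $\int|\widehat{\nu^\epsilon}|^2=\|\nu^\epsilon\|_{L^2}^2$. Plugging \eqref{defmeasure} into $\widehat{\nu^\epsilon}$ and using $\int_{\R^m}e^{-2\pi i\xi\cdot u}\,d\xi=\delta_0(u)$ gives, with $\vec x=(x^1,\dots,x^{k+1})$,
$$\int_{\R^m}|\widehat{\nu^\epsilon}(\xi)|^2\,d\xi=\int\int\delta_0\big(\Phi(\vec x)-\Phi(\vec y)\big)\prod_{j=1}^{k+1}\mu_j^\e(x^j)\,\mu_j^\e(y^j)\,d\vec x\,d\vec y .$$
For fixed $\vec x$, the co-area formula for $\vec y\mapsto\Phi(\vec y)$ converts the $\vec y$-integral into a surface integral over the level set $\Phi^{-1}(\Phi(\vec x))$, which by hypothesis is exactly the orbit $G\vec x=\{(gx^1,\dots,gx^{k+1}):g\in G\}$; a second application of the co-area formula to the orbit map $g\mapsto(gx^1,\dots,gx^{k+1})$ pushes that surface integral to $G$, yielding $\int_G\prod_j\mu_j^\e(gx^j)\,v(\vec x,g)\,d\lambda_G(g)$ for a continuous positive weight $v$. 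This is the one step where the group structure really matters: $\Phi^{-1}(\Phi(\vec x))$ is unchanged under $\vec x\mapsto h\vec x$, and chasing this through together with $d\lambda_G(gh^{-1})=d\lambda_G(g)$ forces $v(\vec x,g)=V(gx^1,\dots,gx^{k+1})$ for a continuous positive $V$ on $U$ depending only on the image point. Integrating against $\prod_j\mu_j^\e(x^j)$ in $\vec x$ then gives
$$\int|\widehat{\nu^\epsilon}|^2=\int_{\R^{d(k+1)}}\int_{G}V(gx^1,\dots,gx^{k+1})\prod_{j=1}^{k+1}\mu_j^\e(x^j)\,\mu_j^\e(gx^j)\,d\lambda_G(g)\,d\vec x .$$

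On the support of the integrand one has $gx^j\in\mathrm{supp}\,\mu_j^\e$ for every $j$, so the point $(gx^1,\dots,gx^{k+1})$ lies in a fixed compact subset of $U$ for all small $\epsilon$, on which $V\approx1$; hence $\int|\widehat{\nu^\epsilon}|^2\approx\int_{\R^{d(k+1)}}\int_G\prod_j\mu_j^\e(x^j)\mu_j^\e(gx^j)\,d\lambda_G(g)\,d\vec x$, and Fubini factors the $\vec x$-integral into $\prod_j\int_{\R^d}\mu_j^\e(x)\mu_j^\e(gx)\,dx$, which is \eqref{gengenMat}. I expect the main technical work to be the bookkeeping in the two co-area steps — in particular justifying that the orbit map has finite-volume fibres, which is what keeps $v$ finite and positive — while the smallness of $\epsilon$ is used only to keep everything inside $U$.

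For the positivity claim, assume the right-hand side of \eqref{gengenMat} is bounded uniformly in $\epsilon$; then $\|\nu^\epsilon\|_{L^2}\lesssim1$ uniformly. Since $\Delta_\Phi(E_1,\dots,E_{k+1})$ is compact and the shrinking supports of the $\nu^\epsilon$ all sit in a fixed compact $K$, the family $\{\nu^\epsilon\}$ is bounded in $L^2(K)$, so I can pass to a subsequence $\nu^{\epsilon_n}\rightharpoonup h$ weakly in $L^2(K)$ with $h\ge0$. Weak $L^2(K)$-convergence implies weak-$*$ convergence of measures, so $h\,dx$ is a probability measure (test against $\mathbf 1_K$) supported on $\Delta_\Phi(E_1,\dots,E_{k+1})$ (test against continuous functions vanishing there). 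Hence $\int_{\Delta_\Phi(E_1,\dots,E_{k+1})}h\,dx=1$, so $\{h>0\}$ meets $\Delta_\Phi(E_1,\dots,E_{k+1})$ in a set of positive Lebesgue measure; in particular $\Delta_\Phi(E_1,\dots,E_{k+1})$ has positive Lebesgue measure. $\square$
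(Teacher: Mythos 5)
Your proposal is correct and follows essentially the same route as the paper: both reduce $\|\nu^\epsilon\|_{L^2}^2$ via the coarea formula to an integral over the level sets $\Phi^{-1}(\vec{t})$, identify each level set with a $G$-orbit, and use the invariance of $\lambda_G$ to conclude that the resulting density is $\approx 1$ on compact sets. The differences are organizational (the paper writes $\nu^\epsilon(\vec{t})$ in two ways and multiplies, rather than expanding the $L^2$ norm with a delta function and applying coarea twice), and you additionally write out the weak-compactness argument for the positivity claim, which the paper states but does not prove.
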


In particular, if, like distances, $k=2$, $E_1=E_2=E$ and $\Phi$ is translation invariant, which means
$\Phi(x,y)= \Phi(x',y') $
if and only if there exists $z\in\R^d$, $g\in G$ such that
$$x'=gx+z,\ y'=gy+z.  $$
The right hand side of \eqref{gengenMat} becomes
$$\int_{G}\int_{\R^d}\left|\int_{\R^d} \mu^\e(x)\,\mu^\e(z-g x)\,dx\right|^{2}\,dz\,d\lambda_{G}(g).$$
By Plancherel in $x$ it equals
$$\int_{G}\int_{\R^d}\left|\int_{\R^d} \widehat{\mu^\e}(\xi)\,e^{2\pi i z\cdot\xi}\widehat{\mu^\e}(g \xi)\,d\xi\right|^{2}\,dz\,d\lambda_{G}(g).$$
Then by Plancherel in $z$, as $\e\rightarrow 0$, it equals
$$\int |\hat{\mu}(\xi)|^2\left(\int_{G}|\hat{\mu}(g\xi)|^2\,d\lambda_{G}(g)\right)\,d\xi.$$
Thus we have the following corollary.
\begin{corollary}\label{CorMat}
	Given $E\subset\R^d$, $d\geq 2$ and $\Phi:\R^d\times\R^d\rightarrow\R^m$ is a Lipschitz map with non-vanishing Jacobian $J_m\Phi$ on $E\times E$. Suppose there exists a locally compact topological group $G$ such that $$\Phi(x,y)= \Phi(x',y') $$
if and only if there exists $z\in\R^d$, $g\in G$ such that
$$x'=gx+z,\ y'=gy+z.$$
Then 
$$\{\Phi(x,y):x,y\in E\} $$
has positive Lebesgue measure if there exists a Borel measure $\mu$ on $E$ and Haar measure $\lambda_{G}$ on $G$ such that
$$\int |\hat{\mu}(\xi)|^2\left(\int_{G}|\hat{\mu}(g\xi)|^2\,d\lambda_{G}(g)\right)\,d\xi<\infty.$$
\end{corollary}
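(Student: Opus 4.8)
The plan is to derive Corollary \ref{CorMat} as a direct specialization of Theorem \ref{main}, so essentially all the work is the reduction already sketched immediately before the statement; what remains is to make that sketch rigorous and to supply the limiting argument that converts the finiteness of the continuous (limit) expression into the uniform bound on the discretized expression that Theorem \ref{main} actually requires. First I would fix a measure $\mu$ on $E$ with $\int|\hat\mu(\xi)|^2\bigl(\int_G|\hat\mu(g\xi)|^2\,d\lambda_G(g)\bigr)\,d\xi<\infty$ and observe that, since $\Phi$ is translation invariant in the sense stated, the invariance hypothesis of Theorem \ref{main} holds with the group $\widetilde G = G\ltimes\R^d$ acting on $\R^d$ by $(g,z)\cdot x = gx+z$. (One must check this semidirect product is a locally compact topological group and identify a right Haar measure; the natural candidate is $d\lambda_G(g)\,dz$, and I would verify right-invariance by a change of variables.) Applying Theorem \ref{main} with $k=2$, $E_1=E_2=E$, and $\mu_1=\mu_2=\mu$ gives
\[
\int|\widehat{\nu^\e}(\xi)|^2\,d\xi \approx \int_{\widetilde G}\prod_{j=1}^2\left(\int_{\R^d}\mu^\e(x)\,\mu^\e(\tilde g x)\,dx\right)d\lambda_{\widetilde G}(\tilde g),
\]
and unwinding $\tilde g=(g,z)$ this right-hand side is exactly $\int_G\int_{\R^d}\bigl|\int_{\R^d}\mu^\e(x)\,\mu^\e(z-gx)\,dx\bigr|^2\,dz\,d\lambda_G(g)$, the expression displayed before the corollary.

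Next I would carry out the two Plancherel steps in that display carefully. Writing $F_g^\e(z)=\int\mu^\e(x)\,\mu^\e(z-gx)\,dx$, Plancherel in $x$ expresses the inner integral as $\int\widehat{\mu^\e}(\xi)\,e^{2\pi i z\cdot\xi}\,\widehat{\mu^\e}(g\xi)\,d\xi$ (using that $\widehat{\mu^\e}$ is Schwartz, so the $x$-integral of the product of two $L^2$ functions is the integral of the product of their Fourier transforms, with the reflection/modulation bookkeeping for the argument $z-gx$), and then Plancherel in $z$ turns $\int|F_g^\e(z)|^2\,dz$ into $\int|\widehat{\mu^\e}(\xi)|^2\,|\widehat{\mu^\e}(g\xi)|^2\,d\xi$; one also uses here that $g$ acts orthogonally or at least with Jacobian one, or more generally absorbs the Jacobian of $x\mapsto gx$ into constants — I would state the precise hypothesis on $G$ needed (it is harmless for $O(d)$, the case of interest). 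Thus for every $\e$,
\[
\int|\widehat{\nu^\e}(\xi)|^2\,d\xi \approx \int_{\R^d}\int_G |\widehat{\mu^\e}(\xi)|^2\,|\widehat{\mu^\e}(g\xi)|^2\,d\lambda_G(g)\,d\xi.
\]

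The only genuine subtlety is passing from the $\e$-mollified quantities to the clean limit: I would use $\widehat{\mu^\e}=\widehat\mu\cdot\widehat{\phi^\e}$ with $|\widehat{\phi^\e}|=|\widehat\phi(\e\xi)|\le\|\widehat\phi\|_\infty\le 1$ (after normalizing $\widehat\phi(0)=\int\phi=1$ and assuming, as one may by choosing $\phi\ge0$, that $|\widehat\phi|\le1$ everywhere — or simply absorbing $\|\widehat\phi\|_\infty^4$ into the implicit constant). Hence the right-hand side above is dominated, uniformly in $\e$, by $\int_{\R^d}|\widehat\mu(\xi)|^2\int_G|\widehat\mu(g\xi)|^2\,d\lambda_G(g)\,d\xi$, which is finite by hypothesis; by Tonelli every integrand is nonnegative so there is no integrability obstruction to the interchange. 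This produces a bound on $\int|\widehat{\nu^\e}(\xi)|^2\,d\xi$ uniform in $\e$, and the ``Moreover'' clause of Theorem \ref{main} then yields that $\Delta_\Phi(E,E)=\{\Phi(x,y):x,y\in E\}$ has positive Lebesgue measure, completing the proof. The main obstacle — really the only place demanding care rather than bookkeeping — is the first paragraph's identification of the correct enlarged group $\widetilde G=G\ltimes\R^d$ together with a right Haar measure compatible with the factorization $d\lambda_G\,dz$; the Plancherel manipulations and the $\e\to0$ domination are then routine.
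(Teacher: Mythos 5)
Your proposal is correct and follows essentially the same route as the paper, which derives the corollary from Theorem \ref{main} by exactly the specialization $E_1=E_2=E$ with the translation--$G$ invariance and the same two Plancherel applications. The only difference is that you make explicit the uniform-in-$\epsilon$ domination via $|\widehat{\phi}(\epsilon\xi)|\leq 1$ (and the semidirect-product structure of the enlarged group), points the paper passes over with the phrase ``as $\e\rightarrow 0$, it equals''; this is a welcome tightening, not a different argument.
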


\subsection{Product of distance sets}
As applications of Theorem \ref{main}, we first consider the product of distance sets,
$$(\Delta(E))^k= \left\{\prod_{j=1}^k |x^j-y^j|: x^j, y^j\in E\right\}. $$

\begin{theorem}\label{d>1}
Suppose $E\subset\R^d$, $d\geq 2$, $k\in\mathbb{Z}^+$. Then $(\Delta(E))^k$ has positive Lebesgue measure if $\dH(E)>\frac{d}{2}+\frac{1}{4k-1}$.
\end{theorem}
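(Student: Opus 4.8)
The plan is to fit $(\Delta(E))^k$ into Theorem~\ref{main}, reduce the resulting generalized Mattila integral to an $L^{2k}$‑norm of a Mellin transform of the distance measure, and bound it using the Wolff--Erdogan spherical average estimate.

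First I would group the $2k$ points as two ``super‑points'' $X=(x^1,\dots,x^k)$, $Y=(y^1,\dots,y^k)\in\R^{dk}$, take $\Phi(X,Y)=\prod_{j=1}^k|x^j-y^j|$, and observe that its invariance group is $G=\{(s_1,\dots,s_k)\in\mathrm{Sim}(d)^k:\prod_j\lambda(s_j)=1\}$, with $\lambda(s_j)$ the dilation ratio of $s_j$, acting on $(\R^d)^k$ block by block: if $\prod|x^j-y^j|=\prod|\tilde x^j-\tilde y^j|>0$, then taking $\lambda_j=|\tilde x^j-\tilde y^j|/|x^j-y^j|$ (so $\prod\lambda_j=1$) and a similarity of ratio $\lambda_j$ carrying $(x^j,y^j)$ to $(\tilde x^j,\tilde y^j)$ exhibits an element of $G$, and conversely $G$ preserves $\Phi$; moreover $\nabla\Phi\neq0$ wherever all $x^j\neq y^j$. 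With $\mu$ a Frostman measure on $E$ of exponent $s\in(\tfrac d2+\tfrac1{4k-1},\dH(E))$ and $\mu_1=\mu_2=\mu^{\otimes k}$ on $E^{\times k}$, Theorem~\ref{main} then applies — after the routine removal of the $\mu^{\otimes k}\!\times\!\mu^{\otimes k}$‑null set where some pair coincides — and since $\Delta_\Phi(E^{\times k},E^{\times k})=(\Delta(E))^k$ up to the point $0$, it suffices to bound $\int|\widehat{\nu^\e}(\xi)|^2\,d\xi$ uniformly in $\e$, where $\nu^\e$ is the push‑forward of $(\mu^\e)^{\otimes2k}$ under $(x^1,y^1,\dots,x^k,y^k)\mapsto\prod_j|x^j-y^j|$.

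Next, since the $2k$ points are independent, $\nu^\e$ is the $k$‑fold multiplicative convolution $\sigma^\e\boxtimes\cdots\boxtimes\sigma^\e$ of the $\e$‑smoothed distance measure $\sigma^\e$ of $\mu^\e$ on $(0,\infty)$; multiplicative convolution multiplies Mellin transforms, so $\mathcal M\nu^\e(z)=(\mathcal M\sigma^\e(z))^k$, and Mellin--Plancherel on $\mathrm{Re}(z)=\tfrac12$ gives
$$\int|\widehat{\nu^\e}(\xi)|^2\,d\xi=c\int_{\R}\big|\mathcal M\sigma^\e(\tfrac12+i\lambda)\big|^{2k}\,d\lambda .$$
(The same quantity arises from expanding the right side of~\eqref{gengenMat} for $G$: it factorizes over the $k$ blocks into $\int_{\prod\lambda_j=1}\prod_jF^\e(\lambda_j)\,d\omega(\vec\lambda)$ with $F^\e(\lambda)=c_d\int_0^\infty g^\e(\rho)g^\e(\lambda\rho)\rho^{d-1}d\rho$, $g^\e(\rho)=\int_{S^{d-1}}|\widehat{\mu^\e}(\rho\omega)|^2d\omega$, and the homogeneity $F^\e(\lambda)=\lambda^{-d}F^\e(1/\lambda)$ recombines it, in logarithmic coordinates, to the display above.) For $k=1$ this is comparable to the Mattila integral $\mathcal M(\mu^\e)$, uniformly bounded iff $s>\tfrac d2+\tfrac13$; so in general I am reduced to proving $\int_{\R}|\mathcal M\sigma^\e(\tfrac12+i\lambda)|^{2k}\,d\lambda\lesssim1$ uniformly in $\e$, after which a weak‑$*$ limit of $\nu^\e$ furnishes an $L^2$ density on the compact set $(\Delta(E))^k$.

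The hard part will be this last bound. Transporting by $\log$, it says the $k$‑fold additive convolution of $\log_*\sigma^\e$ stays in $L^2(\R)$, which holds once $\widehat{\log_*\sigma^\e}(\lambda)=\int t^{-2\pi i\lambda}\,d\sigma^\e(t)$ decays at a rate $>\tfrac1{2k}$ uniformly in $\e$ — a ``Fourier dimension'' statement for the smoothed distance measure. The crucial point is that one cannot take absolute values: $|\mathcal M\sigma^\e(\tfrac12+i\lambda)|\le\iint|x-y|^{-1/2}d\mu^\e d\mu^\e=O(1)$ reduces the $2k$‑th power integral to $\mathcal M(\mu^\e)$, which is \emph{unbounded} for $\tfrac d2+\tfrac1{4k-1}<s\le\tfrac d2+\tfrac13$. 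Genuine decay in $\lambda$ must be extracted, and this is exactly where the Wolff--Erdogan spherical average estimate (Wolff for $d=2$, Erdogan for $d\ge3$) enters — for $\mu^\e$ to control $g^\e$, and for the coordinate‑weighted measures $x_i\mu^\e$ to control the $\rho$‑regularity of $g^\e$ (so as to integrate by parts against the oscillation $t^{-2\pi i\lambda}$) — fed through a dyadic decomposition in $\rho$ that keeps the $\e$‑dependence at bay. Balancing the decay exponent this produces against the requirement $>\tfrac1{2k}$ is what yields the threshold $s>\tfrac d2+\tfrac1{4k-1}$; at $k=1$ the required decay is $>\tfrac12$ and the threshold is the classical $\tfrac d2+\tfrac13$. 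All the remaining steps — the group computation, the weak‑$*$ limit, the null‑set removal — are routine.
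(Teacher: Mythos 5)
Your setup (the two super-points, the group of block-similarities with product of dilation ratios equal to $1$, the reduction to a uniform bound on $\int|\widehat{\nu^\e}|^2$) matches the paper, and your Mellin reformulation
$\int|\widehat{\nu^\e}(\xi)|^2\,d\xi = c\int_{\R}|\mathcal M\sigma^\e(\tfrac12+i\lambda)|^{2k}\,d\lambda$
is correct and is indeed equivalent to the right-hand side of \eqref{gengenMat} for this group. The gap is the final estimate, which is the entire content of the theorem: you propose to prove the $L^{2k}$ bound via \emph{pointwise} decay $|\mathcal M\sigma^\e(\tfrac12+i\lambda)|\lesssim(1+|\lambda|)^{-\beta}$ with $\beta>\tfrac1{2k}$, but you never establish such decay, and it is not a consequence of the Wolff--Erdogan bound \eqref{sphave} in any direct way. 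Writing $\mathcal M\sigma^\e(\tfrac12+i\lambda)=c(\lambda)\int_0^\infty g^\e(\rho)\rho^{-1/2-i\lambda}\,d\rho$ with $|c(\lambda)|\sim|\lambda|^{(d-1)/2}$, the estimate \eqref{sphave} only controls the \emph{size} of $g^\e$; to beat the growth of $c(\lambda)$ one must extract genuine cancellation from $\rho^{-i\lambda}$, and the integration-by-parts you sketch (via regularity of $g^\e$ from the measures $x_i\mu^\e$) does not yield a full derivative's worth of decay in the relevant range of $s$. Worse, to recover the thresholds $\tfrac d2+\tfrac1{4k-1}$ simultaneously for all $k$, your $\beta$ would have to equal $\frac{2(s-d/2)}{1+(s-d/2)}$ as a function of $s$, a specific nonlinear profile that no interpolation of the available estimates is shown (or likely) to produce. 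Note also that even at $k=1$ the classical $\tfrac d2+\tfrac13$ result is \emph{not} proved through pointwise decay of $\mathcal M\sigma$: finiteness of $\int|\mathcal M\sigma(\tfrac12+i\lambda)|^2\,d\lambda$ (the Mattila integral) is obtained by an $L^2$ argument, and pointwise decay of the distance measure's transform at rate $>\tfrac12$ is not known at that dimensional threshold. So you have replaced the required bound by a strictly stronger, unproved one.

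The paper avoids this entirely. It expands the group integral as
$\int\cdots\int\prod_j|\widehat\mu(\xi^j)|^2\bigl(\int_{\prod r_j=1}\prod_j\int_{O(d)}|\widehat\mu(r_j\theta\xi^j)|^2\,d\theta\,d\vec r\bigr)\,d\xi^1\cdots d\xi^k$,
decomposes dyadically $|\xi^j|\approx 2^{m_j}$, and applies the Wolff--Erdogan bound \eqref{sphave} \emph{only to the factor with the smallest frequency} $m_k=\min_j m_j$; the remaining $k-1$ factors are handled by the cheap $L^2$ Frostman energy bound (Lemma \ref{Frostmandecay}) after converting the $(r_j,\theta)$-integral to an annulus average by polar coordinates. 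Summing the geometric series over $m_j>m_k$ and then over $m_k$ gives exactly the exponent $\frac{(4k-1)d-(8k-2)\dH(E)+2}{4}$, hence the threshold. This ``strong estimate on the minimal frequency, energy estimates elsewhere'' dyadic scheme is the idea your proposal is missing; without it (or a genuinely new pointwise decay theorem for the Mellin transform of the distance measure), the proof is incomplete at its central step.
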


This result is not trivial. Notice that there exists $A\subset\R$ of Hausdorff dimension $1$ while $|A^k|=0$ for any $k\in\mathbb{Z}^+$. Also whether $|\Delta(E)|>0$ is still unknown when $\dH(E)\leq\frac{d}{2}+\frac{1}{3}$. 
\subsection{Continuous sum-product problems}
Given any finite set $A\subset\mathbb{N}$, one can define its sum set and product set by
$$A+A=\{a_1+a_2:a_1,a_2\in A\},$$
$$AA=\{a_1a_2:a_1,a_2\in A\},  $$
respectively. The Erd\H{o}s-Szemer\'edi conjecture (\cite{ES83}) states that for any $\epsilon>0$,
$$\max\{\#(A+A), \#(AA)\}\ge C_\epsilon \#(A)^{2-\epsilon}.$$
There are various formulations of sum-product estimates, also in different settings, such as finite fields. For more information one can see \cite{KS16}, \cite{MRS15}, \cite{MRS17}, \cite{BKT04}, \cite{MPRRS17} and references therein.

In this paper, by applying Theorem \ref{main}, we obtain the following continuous analog of sum-product estimate. Denote $x\cdot y$ as the dot-product between $x,y\in\R^2$.
\begin{theorem}\label{sum-prod}
	Suppose $E, F, H\subset\R^2$. With notations above,
	$$|E\cdot(F+H)|>0$$
	whenever $\dH(E)+\dH(F)+\dH(H)>4$. This dimensional threshold is generally sharp.
\end{theorem}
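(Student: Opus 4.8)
The plan is to apply the second half of Theorem~\ref{main} to the Lipschitz map $\Phi\colon\R^2\times\R^2\times\R^2\to\R$, $\Phi(x,y,z)=x\cdot(y+z)$, whose gradient is $(y+z,x,x)$. Replacing $E$ by $E\setminus B(0,\delta)$ changes neither $\dH(E)$ (for $\delta$ small) nor the problem, so we may assume $0\notin E$; then $J_1\Phi$ does not vanish on $E\times F\times H$. The proof of the ``moreover'' clause of Theorem~\ref{main} uses only $J_m\Phi\neq0$, so it suffices to exhibit Frostman measures $\mu_E,\mu_F,\mu_H$ on $E,F,H$ of exponents $\alpha,\beta,\gamma$ with $\alpha<\dH(E)$, $\beta<\dH(F)$, $\gamma<\dH(H)$, $\alpha+\beta+\gamma>4$, and $\alpha,\beta,\gamma\le 2$, for which $\sup_{\epsilon}\|\nu^\epsilon\|_{L^2(\R)}<\infty$; one then lets $\alpha,\beta,\gamma$ increase to $\dH(E),\dH(F),\dH(H)$.

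For this $\Phi$ one computes $\widehat{\nu^\epsilon}(\xi)=\int_{\R^2}\mu_E^\epsilon(x)\,\widehat{\mu_F^\epsilon}(\xi x)\,\widehat{\mu_H^\epsilon}(\xi x)\,dx$, so, with $\sigma:=\mu_F*\mu_H$,
\[
\|\nu^\epsilon\|_{L^2(\R)}^2=\int_{\R}\Bigl|\int_{\R^2}\mu_E^\epsilon(x)\,\widehat{\sigma^\epsilon}(\xi x)\,dx\Bigr|^2\,d\xi .
\]
This is precisely the left side of \eqref{gengenMat} for the present $\Phi$: the relevant symmetry is now not a diagonal group action but the $GL_2(\R)$-action on $\R^2\times\R^2$ given by $g\cdot(x,w)=(gx,(g^{T})^{-1}w)$, which fixes the bilinear form $x\cdot w$ and acts transitively on its nonzero level sets. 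Expanding the square and carrying out the $\xi$-integration puts $\|\nu^\epsilon\|_{L^2(\R)}^2$ into the ``geometric'' form
\[
\|\nu^\epsilon\|_{L^2(\R)}^2=\int \mu_E^\epsilon(x)\,\mu_E^\epsilon(x')\,\sigma^\epsilon(w)\,\sigma^\epsilon(w')\,\delta\bigl(x\cdot w-x'\cdot w'\bigr)\,dx\,dx'\,dw\,dw',
\]
i.e.\ an integral over the orbit $\{x\cdot w=x'\cdot w'\}$ against $\mu_E\otimes\mu_E\otimes\sigma\otimes\sigma$.

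The heart of the matter is to bound this last integral using the Frostman data. A first reduction, by Cauchy--Schwarz in $x$ against $\mu_E^\epsilon$ followed by Plancherel in the radial variable, gives
\[
\|\nu^\epsilon\|_{L^2(\R)}^2\lesssim\int_{S^1}\Bigl(\int_\R|\widehat{\mu_F}(s\omega)|^2|\widehat{\mu_H}(s\omega)|^2\,ds\Bigr)\,d(\pi_{0\#}\mu_E)(\omega),
\]
where $\pi_0(x)=x/|x|$ and, since $0\notin E$, the push-forward $\pi_{0\#}\mu_E$ is $(\alpha-1)$-Frostman on $S^1$; the bracketed quantity is $\|(\pi_\omega\mu_F)*(\pi_\omega\mu_H)\|_{L^2(\R)}^2$, $\pi_\omega$ being the orthogonal projection onto $\R\omega$. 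A dyadic decomposition in $s$ together with the unit-scale local constancy of $\widehat{\mu_F},\widehat{\mu_H}$ reduces this to controlling $\sum_{R\in 2^{\mathbb N}}R^{1-\alpha}\int_{|\eta|\sim R}|\widehat{\mu_F*\mu_H}(\eta)|^2\,d\eta$. The hard part is this final estimate: the crude bound coming only from the Frostman exponent of $\mu_F*\mu_H$ (which may be as small as $\max(\beta,\gamma)$) gives merely the weaker threshold $\alpha+\max(\beta,\gamma)>3$, so one must retain the full bilinear structure of the orbit integral and split into cases according to how concentrated $\mu_F$ and $\mu_H$ are relative to each other, the most concentrated configurations being treated by exploiting the extra structure this forces on $F$ and $H$ (positive-measure or one-dimensional sum--product behaviour of $F+H$). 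Balancing the exponents so that the regimes meet produces exactly the condition $\alpha+\beta+\gamma>4$. I expect this last estimate to be the main obstacle.

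For sharpness, fix a line $\ell$ through the origin and choose one-dimensional sets $F,H\subset\ell$ of Lebesgue measure zero whose sum set $F+H\subset\ell$ again has measure zero, together with $E=D\times[0,1]$ in orthonormal coordinates aligned with $\ell$, where $D\subset\ell$ has $\dH(D)=1$ and $|D|=0$; then $\dH(E)=2$, so $\dH(E)+\dH(F)+\dH(H)=4$, while $E\cdot(F+H)$ equals, up to sign, the product set $D\cdot(F+H)\subset\R$, a product of two Lebesgue-null sets. Choosing $D$, $F$, $H$ from a common Cantor-type construction in which all the relevant sums and products remain inside a fixed Lebesgue-null set (the standard continuous analogue of the sum--product extremal examples) forces $|E\cdot(F+H)|=0$. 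Hence the threshold $\dH(E)+\dH(F)+\dH(H)>4$ cannot be relaxed.
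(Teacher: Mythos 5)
Your computation of $\widehat{\nu^\epsilon}(\xi)=\int\mu_E^\epsilon(x)\,\widehat{\sigma^\epsilon}(\xi x)\,dx$ with $\sigma=\mu_F*\mu_H$ is correct, but the argument breaks at the very next step, and the piece you defer (``I expect this last estimate to be the main obstacle'') is the entire content of the theorem. Applying Cauchy--Schwarz in $x$ against $\mu_E^\epsilon$ sets $x'=x$ in the expansion of $\|\nu^\epsilon\|_{L^2}^2$, discarding the off-diagonal cancellation between $\widehat\sigma(\xi x)$ and $\overline{\widehat\sigma(\xi x')}$; what survives of $E$ is only its $(\alpha-1)$-dimensional set of directions, and you are left with $\sum_R R^{1-\alpha}\int_{|\eta|\sim R}|\widehat{\mu_F}(\eta)|^2|\widehat{\mu_H}(\eta)|^2\,d\eta$, which is comparable to the $(3-\alpha)$-energy $I_{3-\alpha}(\mu_F*\mu_H)$. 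This quantity can genuinely be infinite under the hypothesis $\alpha+\beta+\gamma>4$: take $A\subset\R$ with $\dH(A)=\dH(A+A)=s$ for some $s\in(\tfrac12,1)$ (a standard homogeneous Cantor construction), set $F=H=A\times A$, so $\beta=\gamma=2s$ and $\mu_F*\mu_H$ is supported on $(A+A)^2$ of dimension $2s$; then $I_t(\mu_F*\mu_H)=\infty$ for every $t>2s$, while the hypothesis only forces $3-\alpha<4s-1$, and the window $2s<3-\alpha<4s-1$ is nonempty. So the Cauchy--Schwarz reduction is fatally lossy, not merely unfinished, and no amount of case analysis on the reduced quantity can recover the threshold $4$. (A secondary issue: your $GL_2(\R)$ contragredient action $g\cdot(x,w)=(gx,(g^T)^{-1}w)$ acts differently on the two factors and has a noncompact one-dimensional stabilizer, so it does not satisfy the hypotheses of Theorem~\ref{main}, which requires the same $g$ applied to every coordinate.)

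The paper's route is built precisely to avoid this collapse. Replacing $F+H$ by $(F+H)^\perp$ turns the form into $x\cdot y^\perp$, which is invariant under the \emph{diagonal} action of $SL_2(\R)$, acting simply transitively on the nonzero level sets; Theorem~\ref{main} with $\mu_1=\mu_E$, $\mu_2=\mu_F*\mu_H$ then expresses $\|\nu^\epsilon\|_{L^2}^2$ as a three-parameter group average. A stationary-phase lemma (Lemma~\ref{fastdecay}) reduces to $|\xi|\approx|\eta|$, and the key bound $\int|\widehat{\mu}(g\eta)|^2\psi(g)\,d\lambda_{SL_2(\R)}(g)\lesssim|\eta|^{-\dH+\epsilon}$ (Lemma~\ref{avgsl2}, via the Iwasawa decomposition) allows Cauchy--Schwarz \emph{in $g$} to split $|\widehat{\mu_F}(g\eta)|\,|\widehat{\mu_H}(g\eta)|$ into two separately controlled $L^2$ averages over a full two-dimensional annulus. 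That decoupling is exactly what your single-frequency product $|\widehat{\mu_F}(\eta)|^2|\widehat{\mu_H}(\eta)|^2$ forbids. Finally, your sharpness example is both incomplete (you still owe the construction of the one-dimensional null sets with null product set) and unnecessary: $E=\{(0,0)\}$, $F=H=[0,1]^2$ already attains the threshold.
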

To see the sharpness, one can take $E=\{(0,0)\}$, $F=H=[0,1]^2$.

Taking $E=A\times\{0\}$, $F=B\times [0,1]$, $H=C\times [0,1]$, Theorem \ref{sum-prod} implies the following sum-product estimate on $\R$.
\begin{corollary}\label{cor1}
Suppose $A, B, C\subset\R$ and $\dH(A)+\dH(B)+\dH(C)>2$, then
$$|A(B+C)|>0.$$
This dimensional threshold is generally sharp.
\end{corollary}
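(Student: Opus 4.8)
The plan is to deduce Corollary~\ref{cor1} from Theorem~\ref{sum-prod} via the embedding indicated just before its statement; the only real content is a dimension count and an elementary identification of the two product sets. Fix $A,B,C\subset\R$ with $\dH(A)+\dH(B)+\dH(C)>2$; we may assume all three are nonempty, since otherwise both sides of the desired conclusion are empty. Set
$$E=A\times\{0\},\qquad F=B\times[0,1],\qquad H=C\times[0,1]\subset\R^2.$$

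First I would verify the hypothesis of Theorem~\ref{sum-prod}. Using the standard product formula for Hausdorff dimension — which gives equality whenever one of the two factors has its Hausdorff dimension equal to its upper box dimension, as both $\{0\}$ and $[0,1]$ do — one gets $\dH(E)=\dH(A)$, $\dH(F)=\dH(B)+1$, and $\dH(H)=\dH(C)+1$. Hence
$$\dH(E)+\dH(F)+\dH(H)=\dH(A)+\dH(B)+\dH(C)+2>4,$$
so Theorem~\ref{sum-prod} applies to $E,F,H$ and yields $|E\cdot(F+H)|>0$.

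Next I would identify $E\cdot(F+H)$ with $A(B+C)$. Since $F+H=(B+C)\times[0,2]$, and since $(a,0)\cdot(w,u)=aw$ for every $a\in A$, $w\in B+C$, $u\in[0,2]$ — the second coordinate of $F+H$ being invisible to the dot product with a vector supported on the first coordinate — we obtain
$$E\cdot(F+H)=\{aw:a\in A,\ w\in B+C\}=A(B+C).$$
Combining this with the previous step gives $|A(B+C)|>0$, which is the assertion.

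For the sharpness claim it is enough to exhibit one configuration at the threshold whose product set is Lebesgue-null. The degenerate choice $A=\{0\}$, $B=C=[0,1]$ already does this: $\dH(A)+\dH(B)+\dH(C)=2$ while $A(B+C)=\{0\}$. A non-degenerate example comes from letting $A\subset\R$ be the set from the remark after Theorem~\ref{d>1}, with $\dH(A)=1$ and $|A\cdot A|=0$, and taking $B=A$, $C=\{0\}$: then $\dH(A)+\dH(B)+\dH(C)=2$ but $A(B+C)=A\cdot A$ is null. I do not expect a genuine obstacle anywhere in this argument; the one point I would be careful to cite rather than reprove is the product formula $\dH(X\times[0,1])=\dH(X)+1$, everything else being bookkeeping together with the invocation of Theorem~\ref{sum-prod}.
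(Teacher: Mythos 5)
Your proposal is correct and is exactly the paper's argument: the paper derives Corollary~\ref{cor1} in one line by setting $E=A\times\{0\}$, $F=B\times[0,1]$, $H=C\times[0,1]$ and invoking Theorem~\ref{sum-prod}, with sharpness from $A=\{0\}$, $B=C=[0,1]$. Your write-up just fills in the routine details (the dimension count --- for which the one-sided inequality $\dH(X\times[0,1])\ge\dH(X)+1$ already suffices --- and the identification $E\cdot(F+H)=A(B+C)$), all of which are fine.
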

Sharpness follows from $A=\{0\}$, $B=C=[0,1]$.

As a remark, we point out that the dimensional exponent $4$ in Theorem \ref{sum-prod} can also be obtained from Peres-Schlag's generalized projection theorem (\cite{PS00}). However, our proof is straightforward and shows the power of the group action method.




{\bf Notations.} Throughout this paper, $X\lesssim Y$ means $X\leq CY$ for some constant $C>0$. $X\lesssim_\epsilon Y$ means $X\leq C_\epsilon Y$ for some constant $C_\epsilon>0$, depending on $\epsilon$.
\vskip.125in
{\bf Acknowledgements.} The author really appreciates Prof. Ka-Sing Lau from Chinese University of Hong Kong for his financial support of research assistantship. 

\section{Proof of Theorem \ref{main}}
\subsection{Idea of the proof}
We first sketch the idea of the proof. A rigorous proof comes later in this section.
\vskip.125in
Denote $\mu_j$ as a probability Frostman measure on $E_j$. By the assumption on $\Phi$, 
$$\Phi^{-1}(\vec{t})=\{(x^1,\dots,x^{k+1}): \Phi(x^1,\dots,x^{k+1})=\vec{t}\}=\{(gx_{t}^1,\dots,gx_{t}^{k+1}):g\in G \},  $$
where $(x_t^1,\dots,x_t^{k+1})\in \Phi^{-1}(\vec{t})$ is arbitrary but fixed. Roughly speaking, we have two ways to define a measure $\nu$ on $\Delta_\Phi(E_1,\dots,E_{k+1})$,
$$ \nu(\vec{t})=\int_{\Phi^{-1}(\vec{t})} \,\mu_1(x^1)\cdots \mu_{k+1}(x^{k+1})\,d\mathcal{H}^{d(k+1)-m}(x^1,\dots,x^{k+1})$$
and
$$\nu(\vec{t})\approx\int_G \,\mu_1(gx_t^1)\cdots \mu_{k+1}(gx_t^{k+1})\,d\lambda_G(g),$$
where $(x_t^1,\dots,x_t^{k+1})\in\Phi^{-1}(\vec{t})$ is arbitrary but fixed.

Multiplying these two expressions, $||\nu||_{L^2}^2$ is approximately equal to
\begin{equation*}
\int\int_{\Phi^{-1}(\vec{t})} \,\mu_1(x^1)\cdots \mu_{k+1}(x^{k+1})\left(\int_G \,\mu_1^\epsilon(gx_t^1)\cdots \mu_{k+1}^\epsilon(gx_t^{k+1})\,d\lambda_G(g)\right)d\mathcal{H}^{d(k+1)-m}(x^1,\dots,x^{k+1})\,d\vec{t}.
\end{equation*}
	By the invariance of the Haar measure, we may replace $(x_t^1,\dots,x_t^{k+1})$ by $(x^1,\dots,x^{k+1})$. Also it is not hard to see that
$$d\mathcal{H}^{d(k+1)-m}\big|_{\Phi^{-1}(\vec{t})}(x^1,\dots,x^{k+1})\,d\vec{t}\approx dx^1\dots dx^{k+1}.  $$
Hence 
\begin{equation*}
	\begin{aligned}
		||\nu||_{L^2}^2\approx&\int_{\R^{d(k+1)}} \,\mu_1(x^1)\cdots \mu_{k+1}(x^{k+1})\,\left(\int_G \,\mu_1(gx^1)\cdots \mu_{k+1}(gx^{k+1})\,d\lambda_G(g)\right)dx\\=&\int_{G}\prod_{j=1}^{k+1}\left(\int_{\R^d} \mu_j(x)\,\mu_j(gx)\,dx\right)\,d\lambda_G(g).
	\end{aligned}
\end{equation*}

\subsection{Rigorous proof} We need the coarea formula to prove the theorem. For smooth cases the coarea formula follows from a simple change of variables. More general forms of the formula for Lipschitz functions were first established by Federer in 1959 and later generalized by different authors. For references, one can see \cite{Fed69}. We will use the following version to prove Theorem \ref{main}.

\begin{theorem}
	[Coarea formula, 1960s]
Let $\Phi$ be a Lipschitz function defined in a domain $\Omega\subset\R^{d(k+1)}$, taking on values in $\R^m$ where $m<d(k+1)$. Then for any $f\in L^1(\R^{d(k+1)})$,
$$\int_{\Omega} f(x)|J_m\Phi(x)|\,dx=\int_{\R^{m}}\left(\int_{\Phi^{-1}(\vec{t})} f(x)\,d\mathcal{H}^{d(k+1)-m}(x)\right)\,d\vec{t},  $$
where $J_k\Phi$ is the $m$-dimensional Jacobian of $\Phi$ and $\mathcal{H}^{d(k+1)-m}$ is the $(d(k+1)-m)$-dimensional Hausdorff measure.
\end{theorem}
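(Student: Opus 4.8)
The coarea formula is classical, due to Federer; I would reconstruct the standard proof (in the spirit of Evans--Gariepy) in four stages, writing $n=d(k+1)$ for brevity and recalling that by Rademacher's theorem $\Phi$ is differentiable at almost every $x\in\Omega$, with $m$-dimensional Jacobian $J_m\Phi(x)=\sqrt{\det\big(D\Phi(x)\,D\Phi(x)^T\big)}$.

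First I would reduce to the case $f=\mathbf{1}_A$ for a Borel set $A\subset\Omega$. A nonnegative $f$ is an increasing limit of simple functions, so monotone convergence on both sides gives the general nonnegative case, and splitting into positive and negative parts handles $f\in L^1$. This step presupposes that $t\mapsto\int_{\Phi^{-1}(t)}f\,d\mathcal{H}^{n-m}$ is measurable and that null sets are negligible on the right; both follow from the Eilenberg coarea inequality
$$\int_{\R^m}^{*}\mathcal{H}^{n-m}\big(\Phi^{-1}(t)\cap A\big)\,dt\;\le\;C_{n,m}\,(\mathrm{Lip}\,\Phi)^m\,\mathcal{H}^n(A),$$
which I would prove first by a Vitali covering argument; it also yields the inequality ``$\ge$'' in the theorem directly.

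Second, I would establish the identity for an affine map $\Phi(x)=Lx+b$ with $\mathrm{rank}\,L=m$. Choosing orthonormal coordinates adapted to the splitting $\R^n=\ker L\oplus(\ker L)^\perp$, each fibre $\Phi^{-1}(t)$ is a translate of $\ker L$, and the formula reduces to Fubini's theorem, the constant factor $J_m\Phi=\sqrt{\det(LL^T)}$ accounting for the linear stretching transverse to the fibres. Third comes the core step, the transfer from the affine to the Lipschitz setting on the regular set $R=\{J_m\Phi>0\}$. Using differentiability a.e., I would decompose $R$ into countably many disjoint Borel pieces on each of which $\Phi$ is bi-Lipschitz and uniformly close to a fixed affine map, with $J_m\Phi$ pinched between $(1+\eta)^{-1}$ and $(1+\eta)$ times the affine Jacobian. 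Applying the affine case piecewise and letting $\eta\to0$ recovers the formula on $R$. The main obstacle is precisely here: one must show that the $(n-m)$-dimensional Hausdorff measure of the sliced pieces converges to that of the true fibres under affine approximation, and that the countable overlaps and boundary effects are controlled; this is where the covering and Eilenberg estimates are used to absorb the error terms.

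Finally, the critical set $C=\{J_m\Phi=0\}$ must contribute nothing. The left integrand vanishes identically on $C$, while on the right one shows $\int_{\R^m}\mathcal{H}^{n-m}\big(\Phi^{-1}(t)\cap C\big)\,dt=0$ by the standard perturbation (Sard-type) argument: slightly perturbing $\Phi$ so that its Jacobian becomes nondegenerate, applying the already-proven regular case, and passing to the limit via the coarea inequality. Assembling the affine case, the Lipschitz approximation on $R$, and the vanishing on $C$ proves the identity for $f=\mathbf{1}_A$, and hence in general. Throughout, the delicate points are the simultaneous control of measurability and of the fibre Hausdorff measures under approximation; the remaining manipulations are routine applications of monotone and dominated convergence.
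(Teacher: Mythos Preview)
The paper does not prove this theorem. It is stated as a classical result (``Coarea formula, 1960s'') with a reference to Federer \cite{Fed69} and then invoked as a black box in the proof of Theorem~\ref{main}. So there is no ``paper's own proof'' to compare against.

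Your sketch is the standard Federer/Evans--Gariepy argument and is correct at the level of detail you give: reduce to characteristic functions, establish the Eilenberg coarea inequality for measurability and control of null sets, handle the affine case by Fubini, pass to Lipschitz maps on the regular set by a countable Lusin-type decomposition into pieces where $\Phi$ is well approximated by an affine map, and dispose of the critical set by a perturbation/Sard argument. The points you flag as delicate (stability of fibre $\mathcal{H}^{n-m}$-measure under affine approximation, measurability of the sliced integrals) are indeed where the work lies, and your outline addresses them appropriately. But for the purposes of this paper none of this is needed; the theorem is simply quoted.
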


With the coarea formula, \eqref{defmeasure} can be written as
\begin{equation*} 
\begin{aligned}
\int f(\vec{t})\, d\nu^\epsilon(\vec{t})=&\int_{\R^{d(k+1)}} f(\Phi(x^1,\dots,x^{k+1})) \,\mu_1^\epsilon(x^1)\cdots \mu_{k+1}^\epsilon(x^{k+1})\,dx^1\cdots dx^{k+1}\\=&\int_{\R^{m}} f(\vec{t})\left(\int_{\Phi^{-1}(\vec{t})} \,\mu_1^\epsilon(x^1)\cdots \mu_{k+1}^\epsilon(x^{k+1})\,\frac{1}{|J_m\Phi(x)|}\,d\mathcal{H}^{d(k+1)-m}(x)\right)d\vec{t}.
\end{aligned}
\end{equation*}

It follows that 
\begin{equation}\label{nu1}
\nu^\epsilon(\vec{t})=\int_{\Phi^{-1}(\vec{t})} \,\mu_1^\epsilon(x^1)\cdots \mu_{k+1}^\epsilon(x^{k+1})\,\frac{1}{|J_m\Phi(x)|}\,d\mathcal{H}^{d(k+1)-m}(x).
\end{equation}
On the other hand, since $\Phi$ is $G$-invariant, any Haar measure $\lambda_G$ on $G$ induces a measure $\sigma_t$ on $\Phi^{-1}(\vec{t})$ by
$$\int_{\Phi^{-1}(\vec{t})} f(x^1,\dots,x^{k+1})\, d\sigma_t = \int_{G} f(g x_t^1,\dots,g x_t^{k+1})\, d\lambda_G(g), $$
where $(x_t^1,\dots,x_t^{k+1})$ is any fixed point in $\Phi^{-1}(\vec{t})$. By the invariance of the Haar measure, $\sigma_t$ does not depedent on the choice of $(x_t^1,\dots,x_t^{k+1})$, 
and it must be absolutely continuous with respect to 
$\mathcal{H}^{d(k+1)-m}|_{\Phi^{-1}(\vec{t})}$, i.e., there exist a positive  function $\psi$ on $\Phi^{-1}(\vec{t})$ such that
$$\sigma_t=\psi\, \mathcal{H}^{d(k+1)-m}|_{\Phi^{-1}(\vec{t})}. $$

On any compact set, $\psi\approx 1$, so another expression of $\nu^\e$ follows,
\begin{equation}\label{nu2}
\begin{aligned}
\nu^\epsilon(\vec{t})=&\int_{\Phi^{-1}(\vec{t})} \,\mu_1^\epsilon(x^1)\cdots \mu_{k+1}^\epsilon(x^{k+1})\,\frac{1}{|J_m\Phi(x)|}\,d\mathcal{H}^{d(k+1)-m}(x)\\\approx&\int_G \,\mu_1^\epsilon(gx_t^1)\cdots \mu_{k+1}^\epsilon(gx_t^{k+1})\,d\lambda_G(g),
\end{aligned}
\end{equation}
where $(x_t^1,\dots,x_t^{k+1})\in \Phi^{-1}(\vec{t})$.
\vskip.125in

Multiply \eqref{nu1} and \eqref{nu2}, we can write the $L^2$-norm of $\nu^\e$ as
\begin{equation*}
	\begin{aligned}
		\int |\nu^\epsilon(\vec{t})|^2\,d\vec{t}\approx \int &\left(\int_{\Phi^{-1}(\vec{t})} \,\mu_1^\epsilon(x^1)\cdots \mu_{k+1}^\epsilon(x^{k+1})\,\frac{1}{|J_m\Phi(x)|}\,d\mathcal{H}^{d(k+1)-m}(x)\right)\\&\times\left(\int_G \,\mu_1^\epsilon(gx_t^1)\cdots \mu_{k+1}^\epsilon(gx_t^{k+1})\,d\lambda_G(g)\right)\,d\vec{t}\\=\int &\int_{\Phi^{-1}(\vec{t})} \,\mu_1^\epsilon(x^1)\cdots \mu_{k+1}^\epsilon(x^{k+1})\\&\times\left(\int_G \,\mu_1^\epsilon(gx_t^1)\cdots \mu_{k+1}^\epsilon(gx_t^{k+1})\,d\lambda_G(g)\right)\,\frac{1}{|J_m\Phi(x)|}d\mathcal{H}^{d(k+1)-m}(x)\,d\vec{t}.
	\end{aligned}
\end{equation*}

Since the value of this integral does not depend on the choice of $(x_t^1,\dots,x_t^{k+1})\in\Phi^{-1}(\vec{t})$, we can replace $(x_t^1,\dots,x_t^{k+1})$ by $(x^1,\dots,x^{k+1})$ and get
\begin{equation*}
		\int \int_{\Phi^{-1}(\vec{t})} \,\mu_1^\epsilon(x^1)\cdots \mu_{k+1}^\epsilon(x^{k+1})\,\left(\int_G \,\mu_1^\epsilon(gx^1)\cdots \mu_{k+1}^\epsilon(gx^{k+1})\,d\lambda_G(g)\right)\,\frac{1}{|J_m\Phi(x)|}\,d\mathcal{H}^{d(k+1)-m}(x)\,d\vec{t}.
\end{equation*}
By the coarea formula, it equals
\begin{equation*}
	\begin{aligned}
		&\int_{\R^{d(k+1)}} \,\mu_1^\epsilon(x^1)\cdots \mu_{k+1}^\epsilon(x^{k+1})\,\left(\int_G \,\mu_1^\epsilon(gx^1)\cdots \mu_{k+1}^\epsilon(gx^{k+1})\,d\lambda_G(g)\right)dx\\=&\int_{G}\prod_{j=1}^{k+1}\left(\int_{\R^d} \mu_j^\e(x)\,\mu_j^\e(gx)\,dx\right)\,d\lambda_G(g).
	\end{aligned}
\end{equation*}
\section{Frostman measures}
\begin{lemma}
	[Frostman Lemma, see, e.g. \cite{Mat95}]
	Suppose $E\subset\R^d$ and denote $\mathcal{H}^s$ as the $s$-dimensional Hausdorff measure. Then $\mathcal{H}^s(E)>0$ if and only if there exists a probability measure $\mu$ on $E$ such that 
$$\mu(B(x,r))\lesssim r^s $$
	for any $x\in\R^d$, $r>0$.
\end{lemma}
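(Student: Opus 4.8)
The plan is to prove the two implications separately; only the ``only if'' direction requires real work. For ``if'', use the \emph{mass distribution principle}: if $\mu$ is a probability measure on $E$ with $\mu(B(x,r))\lesssim r^s$ for all $x\in\R^d$, $r>0$, then for any countable cover $E\subseteq\bigcup_iU_i$ with $\operatorname{diam}U_i\le\delta$ each $U_i$ lies in a ball of radius $\operatorname{diam}U_i$, so $1=\mu(E)\le\sum_i\mu(U_i)\lesssim\sum_i(\operatorname{diam}U_i)^s$; taking the infimum over covers gives $\mathcal H^s_\delta(E)\gtrsim1$ uniformly in $\delta>0$, hence $\mathcal H^s(E)>0$. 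This is the only direction actually invoked later in the paper.

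For ``only if'', assume $\mathcal H^s(E)>0$. First I would reduce to $E$ compact (by inner regularity of Hausdorff measure there is a compact $E'\subseteq E$ with $\mathcal H^s(E')>0$, and a measure witnessing the conclusion for $E'$ serves for $E$), and then reduce to positivity of the Hausdorff \emph{content}: if $\mathcal H^s_\infty(E)=0$, then for each $\varepsilon\in(0,1)$ a cover with $\sum_i(\operatorname{diam}U_i)^s<\varepsilon$ forces $\operatorname{diam}U_i<\varepsilon^{1/s}$ for every $i$, so $\mathcal H^s_{\varepsilon^{1/s}}(E)<\varepsilon$ and, letting $\varepsilon\to0$, $\mathcal H^s(E)=0$, a contradiction. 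Thus $c_0:=\mathcal H^s_\infty(E)>0$.

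The substance is a bottom-up dyadic construction of $\mu$. Rescaling, assume $E\subseteq[0,1)^d$, and write $\ell(Q)$ for the side length of a dyadic cube $Q$. Fix a generation $n$: assign each dyadic cube $Q$ of side $2^{-n}$ meeting $E$ the uniform mass $\ell(Q)^s$ (and zero mass if $Q\cap E=\emptyset$), then ascend generation by generation, each cube inheriting the total mass of its children but with that mass \emph{capped} at $\ell(Q)^s$ whenever it would exceed it (rescaling the mass distributed inside $Q$ down by the corresponding factor). This produces a measure $\mu_n$, supported on the union of the generation-$n$ cubes meeting $E$, with $\mu_n(Q)\le\ell(Q)^s$ for every dyadic cube $Q$ of generation $\le n$. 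Calling $Q$ \emph{saturated} when $\mu_n(Q)=\ell(Q)^s$, every generation-$n$ cube meeting $E$ is saturated, so each $x\in E$ lies in a largest saturated dyadic cube; these maximal saturated cubes are pairwise disjoint and cover $E$, hence $c_0=\mathcal H^s_\infty(E)\lesssim_d\sum_Q\ell(Q)^s=\sum_Q\mu_n(Q)\le\mu_n([0,1)^d)$, where $Q$ ranges over them. Since the $\mu_n$ have total mass in $[(\sqrt d)^{-s}c_0,\,1]$ and uniformly compact supports, a subsequence converges weak-$*$ to a Radon measure $\mu$; testing against the constant function $1$ shows $\mu$ has total mass $\ge(\sqrt d)^{-s}c_0>0$, the shrinking of $\operatorname{supp}\mu_n$ to the closed set $E$ gives $\operatorname{supp}\mu\subseteq E$, and the uniform estimate $\mu_n(B(x,r))\lesssim_d r^s$ (a radius-$r$ ball meets boundedly many dyadic cubes of side $\approx r$, each of mass $\le\ell(Q)^s\le r^s$) transfers to $\mu$ via the portmanteau inequality $\mu(U)\le\liminf_n\mu_n(U)$ for open $U$. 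Normalizing $\mu$ to a probability measure completes the proof.

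The one genuinely nontrivial point is the mass lower bound $\mu_n([0,1)^d)\gtrsim\mathcal H^s_\infty(E)$ --- that the repeated capping cannot destroy more than a dimensional-constant fraction of the mass --- which is exactly the covering-by-maximal-saturated-cubes argument, and the step where the hypothesis $\mathcal H^s_\infty(E)>0$ is consumed. The reductions, the weak-$*$ compactness, and the cube-versus-ball comparisons (whose $\sqrt d$ factors are absorbed into $\lesssim$) are all routine.
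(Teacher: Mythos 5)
The paper does not actually prove this lemma: it is quoted as a standard result with a pointer to \cite{Mat95}, so there is no in-paper argument to compare against. Your proof is the classical one (essentially the proof in the cited reference): the mass distribution principle for the ``if'' direction, and Frostman's bottom-up dyadic construction with capping for the ``only if'' direction. The structure is correct and you isolate the genuinely nontrivial point accurately. Two remarks. First, a local inaccuracy: after the upward capping it is \emph{not} true that every generation-$n$ cube meeting $E$ is saturated for the final $\mu_n$ --- a capping at a coarser generation rescales the mass of all its descendants strictly below $\ell(Q)^s$ (e.g.\ $d=1$, $s=1/2$, $E=[0,1)$, $n=1$: each half gets mass $2^{-1/2}$, the total $2^{1/2}$ exceeds $1$, and after capping each half carries mass $1/2<2^{-1/2}$). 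What is true, and what your covering argument actually needs, is that every generation-$n$ cube meeting $E$ is \emph{contained in} a saturated cube: either its mass was never rescaled, or one takes the coarsest ancestor at which a capping occurred, whose mass equals $\ell(Q)^s$ at that step and is never altered afterwards. With that one-line fix the maximal saturated cubes are pairwise disjoint, cover $E$, and the bound $\mu_n([0,1)^d)\gtrsim_d\mathcal H^s_\infty(E)$ goes through. Second, the reduction to a compact subset of positive measure (and indeed the ``only if'' direction for arbitrary $E\subset\R^d$) silently assumes $E$ is Borel or analytic; this matches the standard statement of Frostman's lemma and the paper's intended use, but deserves a word. Everything else --- the weak-$*$ compactness, the support containment, the transfer of the ball estimate through open sets, and the normalization at the cost of a constant depending on $\mathcal H^s_\infty(E)$ --- is routine and correctly handled.
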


Since by definition $\dH(E)=\sup\{s:\mathcal{H}^s(E)>0\}$, Frostman Lemma implies that for any $\epsilon>0$ there exists a probability measure $\mu_E$ on $E$ such that
\begin{equation}\label{Frostmanmeasure}\mu_E(B(x,r))\lesssim_\epsilon r^{\dH(E)-\epsilon},\ \forall\ x\in\R^d,\ r>0.  
\end{equation}

We need the following properties of Frostman measures throughout this paper.
\begin{lemma}\label{Frostmandecay}
	Suppose $E\subset\R^d$ and $\mu_E$ satisfies \eqref{Frostmanmeasure}, then
	$$\int_{|\xi|\leq R}|\widehat{\mu_E}(\xi)|^2\,d\xi\lesssim_\epsilon R^{d-\dH(E)+\epsilon}. $$
\end{lemma}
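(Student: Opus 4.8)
The plan is to reduce the statement to an elementary dyadic decomposition argument combined with a Fourier-analytic energy estimate. First I would recall the standard connection between Frostman conditions and energy integrals: the decay hypothesis \eqref{Frostmanmeasure} with exponent $s=\dH(E)-\epsilon$ implies (up to an $\epsilon$-loss) that the $s$-energy
$$I_s(\mu_E)=\int\int |x-y|^{-s}\,d\mu_E(x)\,d\mu_E(y)<\infty,$$
and by the standard identity $I_s(\mu)=c_{d,s}\int |\widehat{\mu}(\xi)|^2 |\xi|^{s-d}\,d\xi$ this gives
$$\int |\widehat{\mu_E}(\xi)|^2 |\xi|^{s-d}\,d\xi<\infty.$$
(One must be slightly careful: $\mu_E$ is a fixed compactly supported probability measure, so the low-frequency part $|\xi|\le 1$ contributes a bounded amount to $\int_{|\xi|\le R}|\widehat{\mu_E}|^2$, and the claimed bound $R^{d-s+\epsilon}$ with $\epsilon>0$ absorbs this; hence it suffices to treat $R$ large and the region $1\le|\xi|\le R$.)

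Next I would run the dyadic pigeonhole. Write
$$\int_{1\le|\xi|\le R}|\widehat{\mu_E}(\xi)|^2\,d\xi=\sum_{j\,:\,2^j\le R}\int_{2^j\le|\xi|<2^{j+1}}|\widehat{\mu_E}(\xi)|^2\,d\xi.$$
On the annulus $|\xi|\sim 2^j$ we have $|\xi|^{s-d}\sim 2^{j(s-d)}$, so
$$\int_{2^j\le|\xi|<2^{j+1}}|\widehat{\mu_E}(\xi)|^2\,d\xi\le 2^{j(d-s)}\int_{2^j\le|\xi|<2^{j+1}}|\widehat{\mu_E}(\xi)|^2|\xi|^{s-d}\,d\xi\lesssim 2^{j(d-s)}I_s(\mu_E).$$
Summing the geometric series over $j$ with $2^j\le R$ gives a total of order $R^{d-s}I_s(\mu_E)$. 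Finally, since the Frostman bound only holds with exponent $\dH(E)-\epsilon$ rather than $\dH(E)$, I replace $s$ by $\dH(E)-\epsilon$ and obtain $R^{d-\dH(E)+\epsilon}$, with the implicit constant depending on $\epsilon$ through $I_{\dH(E)-\epsilon}(\mu_E)$; this is exactly the claimed bound $\lesssim_\epsilon R^{d-\dH(E)+\epsilon}$.

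The only genuine point requiring care — the "main obstacle," though it is really just bookkeeping — is justifying the finiteness of $I_s(\mu_E)$ from the ball condition \eqref{Frostmanmeasure}. This is the classical fact that $\mu(B(x,r))\lesssim r^s$ for all $r$ implies $I_{s'}(\mu)<\infty$ for every $s'<s$, proved by the layer-cake / distribution-function computation
$$\int |x-y|^{-s'}\,d\mu_E(y)=s'\int_0^\infty r^{-s'-1}\mu_E(B(x,r)\setminus\{y:|x-y|>r\})\,\frac{dr}{?}$$
— more precisely, $\int|x-y|^{-s'}d\mu_E(y)\lesssim \int_0^{\mathrm{diam}} r^{-s'-1}\mu_E(B(x,r))\,dr\lesssim\int_0^{\mathrm{diam}}r^{s-s'-1}\,dr<\infty$ uniformly in $x$ since $s-s'>0$ — and then integrating in $x$. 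Applying this with $s'=s=\dH(E)-\epsilon$ (legitimate after shrinking $\epsilon$ slightly, or by noting \eqref{Frostmanmeasure} is available for every $\epsilon>0$) closes the argument. Everything else is the geometric summation above, and no oscillatory or curvature input is needed — this is a soft estimate and should be stated and proved in a few lines.
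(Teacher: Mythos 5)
Your argument is correct, but it follows a genuinely different route from the paper's. The paper proves the lemma in one step: choose $\psi\in C_0^\infty(\R^d)$ with $\widehat{\psi}\geq 0$ and $\widehat{\psi}\gtrsim 1$ on the unit ball, dominate $\int_{|\xi|\leq R}|\widehat{\mu_E}(\xi)|^2\,d\xi$ by $\int|\widehat{\mu_E}(\xi)|^2\,\widehat{\psi}(\xi/R)\,d\xi$, convert by Parseval to $R^d\iint\psi(R(x-y))\,d\mu_E(x)\,d\mu_E(y)$, and then use the rapid localization of $\psi(R(x-y))$ to $|x-y|\lesssim 1/R$ together with the ball condition \eqref{Frostmanmeasure} at that single scale. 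This uses the Frostman hypothesis at only one scale and returns $R^{d-s}$ with exactly the exponent $s$ from the hypothesis. You instead route the estimate through the Riesz energy: the ball condition gives $I_{s'}(\mu_E)<\infty$ for $s'<s$ by the layer-cake computation, the identity $I_{s'}(\mu)=c_{d,s'}\int|\widehat{\mu}(\xi)|^2|\xi|^{s'-d}\,d\xi$ transfers this to the Fourier side, and a dyadic sum over annuli $|\xi|\sim 2^j\leq R$ yields $R^{d-s'}$. Both arguments are standard and yours is complete; the trade-offs are that your version imports two auxiliary facts (the energy identity, which needs its own Fourier computation or a citation, and the layer-cake lemma) and concedes an extra arbitrarily small loss in the exponent, which you correctly observe is absorbed by relabelling $\epsilon$, since $\epsilon$ is free in both \eqref{Frostmanmeasure} and the conclusion. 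The paper's proof is shorter, lossless, and self-contained; yours has the merit of isolating all the input in the single familiar quantity $I_{s'}(\mu_E)$. One cosmetic point: the first layer-cake display in your final paragraph is garbled (the set expression and the ``$dr/?$''), though the corrected inline computation immediately after it is the right one and is what should be kept.
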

\begin{proof}
	Take $\psi\subset C_0^\infty(\R^d)$ whose Fourier transform is positive. Then
\begin{equation*}
\begin{aligned}
	\int_{|\xi|\leq R}|\widehat{\mu_E}(\xi)|^2\,d\xi&\lesssim \int_{|\xi|\leq R}|\widehat{\mu_E}(\xi)|^2\,\widehat{\psi}(\frac{\xi}{R})\,d\xi\\&\leq R^d\iint |\psi(R(x-y))|\,d\mu_E(x)\,d\mu_E(y)\\&\lesssim R^d\int\left(\int_{B(y,R)}\,d\mu_E(x)\right)\,d\mu_E(y)\\&\lesssim_\epsilon R^{d-\dH(E)+\epsilon}.
\end{aligned}	
\end{equation*}
\end{proof}
\begin{theorem}[Wolff (\cite{Wol99}), Erdogan (\cite{Erd05})]
Suppose $E\subset\R^d$, $\dH(E)>\frac{d}{2}$ and $\mu$ satisfies \eqref{Frostmanmeasure}, then
	\begin{equation}\label{sphave}\int_{S^{d-1}}|\widehat{\mu}(R\omega)|^2\,d\omega\lesssim_\epsilon R^{-\frac{d+2\dH(E)-2}{4}+\epsilon}. \end{equation}
\end{theorem}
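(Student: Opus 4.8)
The estimate \eqref{sphave} is the spherical averaging estimate of Wolff (\cite{Wol99}) in the plane and Erdogan (\cite{Erd05}) in higher dimensions; a complete proof is far beyond a short argument, so I only describe the route one would take. The plan is to recast \eqref{sphave} as an $L^{2}$ Fourier extension bound, use the Frostman condition \eqref{Frostmanmeasure} to pass from Lebesgue measure to $\mu$, feed the bilinear restriction estimate for the sphere into a Whitney decomposition, and then interpolate against elementary bounds to reach the precise exponent. For the first step, write $\sigma$ for surface measure on $S^{d-1}$ and let $E_{R}f(x)=\int_{S^{d-1}}f(\omega)\,e^{2\pi iR\,\omega\cdot x}\,d\sigma(\omega)$ be the extension operator attached to the sphere of radius $R$. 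The left side of \eqref{sphave} equals $\|\,\omega\mapsto\widehat{\mu}(R\omega)\,\|_{L^{2}(\sigma)}^{2}$, and by duality together with Cauchy--Schwarz, using $\mu(\R^{d})=1$,
\[\int_{S^{d-1}}|\widehat{\mu}(R\omega)|^{2}\,d\sigma(\omega)\ \le\ \|E_{R}\|_{L^{2}(\sigma)\to L^{2}(\mu)}^{2}.\]
Rescaling the sphere back to $S^{d-1}$ turns the task into a fractal-weighted extension inequality $\int|\widehat{f\sigma}|^{2}\,d\mu_{R}\lesssim_{\e}R^{\gamma}\|f\|_{2}^{2}$ for a suitable $\gamma$, where $\mu_{R}$ is the pushforward of $\mu$ under $x\mapsto Rx$: a measure carried by a ball of radius $\sim R$ with $\mu_{R}(B(x,r))\lesssim_{\e}(r/R)^{\dH(E)-\e}$.

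The mechanism for exploiting the thinness of $E$ is elementary: if $g$ has $\widehat{g}$ supported in a ball of radius $R$ and $\nu(B(x,r))\lesssim r^{\beta}$, then $g$ is essentially constant at scale $1/R$, so $\int|g|^{q}\,d\nu\lesssim\|\nu\|\,R^{\,d-\beta}\,\|g\|_{L^{q}(dx)}^{q}$; this turns any $L^{q}(dx)$ extension bound into an $L^{q}(d\mu)$ bound at the cost of a factor $R^{(d-\beta)/q}$. The genuine input is the bilinear extension estimate for the sphere: for caps $\tau_{1},\tau_{2}\subset S^{d-1}$ with $\operatorname{dist}(\tau_{1},\tau_{2})\sim1$ and $f_{i}$ supported on $\tau_{i}$,
\[\|\,\widehat{f_{1}\sigma}\,\widehat{f_{2}\sigma}\,\|_{L^{q}(\R^{d})}\ \lesssim\ \|f_{1}\|_{2}\,\|f_{2}\|_{2},\qquad q>\tfrac{d+2}{d}.\]
The endpoint $q=2$ is classical --- by Plancherel it reduces to the fact that the sum map $S^{d-1}\times S^{d-1}\to\R^{d}$ has $d$-dimensional Jacobian comparable to $(1-(\omega_{1}\cdot\omega_{2})^{2})^{1/2}$, which is bounded below on transverse caps --- while the sharp range $q<2$ is Tao's bilinear restriction theorem; in the plane one needs only this $L^{2}$ bilinear estimate together with the classical $L^{4}$ restriction estimate for the circle, and this is essentially the content of Wolff's original argument.

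To pass from the bilinear to the linear estimate I would invoke the Tao--Vargas--Vega machinery: decompose $f=\sum_{\tau}f_{\tau}$ over caps at all dyadic scales, perform a Whitney decomposition of $S^{d-1}\times S^{d-1}$ off the diagonal into families of cap-pairs $(\tau,\tau')$ with $\operatorname{dist}(\tau,\tau')\sim\operatorname{diam}(\tau)\sim\operatorname{diam}(\tau')\sim2^{-k}$, parabolically rescale each pair to unit scale, apply the bilinear estimate there, use the finite overlap of the resulting Fourier supports, sum the series over $k\lesssim\log R$, and transfer to $d\mu$ by the device above. This yields $\int_{S^{d-1}}|\widehat{\mu}(R\omega)|^{2}\,d\omega\lesssim_{\e}R^{-\gamma(q,\dH(E))+\e}$, and one finishes by interpolating against two elementary bounds --- the trivial $\int_{S^{d-1}}|\widehat{\mu}(R\omega)|^{2}\,d\omega\lesssim1$, and the stationary-phase bound $|\widehat{\sigma}(z)|\lesssim|z|^{-(d-1)/2}$ (for $|z|\ge1$), which through $\int_{S^{d-1}}|\widehat{\mu}(R\omega)|^{2}\,d\omega=\iint\widehat{\sigma}(R(x-y))\,d\mu(x)\,d\mu(y)$ gives the exponent $\tfrac{d-1}{2}$ once $\dH(E)>\tfrac{d-1}{2}$ --- and optimizing the free parameters to land on $\tfrac{d+2\dH(E)-2}{4}$ in the range $\dH(E)>\tfrac{d}{2}$. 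The $R^{\e}$ loss absorbs both the Whitney summation and the fact that \eqref{Frostmanmeasure} only provides a Frostman measure of each dimension strictly below $\dH(E)$.

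The crux --- and the reason \eqref{sphave} is quoted rather than proved here --- is the last step: the bilinear restriction theorem is itself deep, and even granting it, tracking the parabolic-rescaling Jacobians and the overlap of Fourier supports through the Whitney decomposition carefully enough to reach the sharp exponent $\tfrac{d+2\dH(E)-2}{4}$ is precisely the content of Wolff's and Erdogan's papers.
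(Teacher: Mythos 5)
The paper offers no proof of this statement: it is imported verbatim as a theorem of Wolff and Erdogan, used later as a black box in the proof of Theorem \ref{d>1}. Your treatment is consistent with that --- you correctly identify it as a deep external result and give an essentially accurate roadmap of how it is actually proved in the cited papers (weighted/fractal extension estimates, Tao's bilinear restriction theorem, the Tao--Vargas--Vega Whitney decomposition, and interpolation with the trivial and stationary-phase bounds), while being upfront that this sketch is not a proof. The only caveat worth recording is that Wolff's planar argument is rather more delicate than ``$L^{4}$ restriction plus the $L^{2}$ bilinear estimate'' --- it involves an induction-on-scales and a combinatorial decomposition into broad and narrow parts --- but since both you and the paper treat the theorem as quoted rather than proved, this does not affect the logic of the paper.
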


\section{Proof of Theorem \ref{d>1}}
Denote $E^k=E\times\cdots\times E\subset\R^{kd}$. For $x^i,y^i\in \R^{d}$, $i=1,2,\dots,k$, denote $x=(x^1,\dots,x^k)$, $y=(y^1,\dots,y^k)$ and $$\Phi(x,y)=\prod_{i=1}^k|x^i-y^i|.$$

Notice that $\Phi(x,y)=\Phi(x',y')$ if and only if there exists $z\in\R^{kd}$, $g\in G$ such that $x'=gx+z$, $y'=gy+z$, where 
\begin{equation}\label{dialationgroup}G=\left\{diag(r_1\theta_1,\dots,r_k\theta_k)\in M_{kd\times kd}: \prod_{i=1}^k r_i = 1, \theta_j\in O(d) \right\}. \end{equation}
By Corollary \ref{CorMat}, it suffices to show that
\begin{equation}\label{MatProdSum}\mathcal{M}=\int |\widehat{\mu_{E^k}}(\xi)|^2\left(\int_{G}|\widehat{\mu_{E^k}}(g\xi)|^2\,dg\right)\,d\xi<\infty,\end{equation}
where $\mu_{E^K}=\mu_E\times\cdots\times\mu_E$ and $\mu_E$ is any Frostman measure satisfying \eqref{Frostmanmeasure}.

More precisely,
\begin{equation*}
\mathcal{M}=\int_{\R^d}\cdots\int_{\R^d} \prod_{j=1}^k|\widehat{\mu}(\xi^j)|^2\left(\int_{\prod r_j=1}\prod_{j=1}^k\int_{O(d)}|\widehat{\mu}(r_j\theta\xi^j)|^2\,d\xi\,d\vec{r}\right)d\xi^1\dots d\xi^k.
\end{equation*}

We may assume that $\dH(E)>\frac{d}{2}$, $r_j\approx 1$, $|\xi^j|\approx 2^{m_j}$, $m_k=\min \{m_j\}$. By Wolff-Erdogan's bound \eqref{sphave}, 
$$\int_{O(d)}|\widehat{\mu}(r_k\theta\xi^k)|^2\,d\theta\lesssim_\epsilon 2^{m_k(-\frac{d+2\dH(E)-2}{4}+\epsilon)}. $$
Then $\mathcal{M}$ is bounded above by
\begin{equation*}
\begin{aligned}
	\sum_{m_k}\sum_{m_j> m_k}2^{m_k(-\frac{d+2\dH(E)-2}{4}+\epsilon)} \mathop{\int\cdots\int}\limits_{|\xi^j|\approx 2^{m_j}}\prod_{j=1}^{k} |\widehat{\mu}(\xi^j)|^2 \left(\prod_{j=1}^{k-1}\int\int_{O(d)}|\widehat{\mu}(r_j\theta\xi^j)|^2\,d\theta\,dr_j\right)d\xi^1\dots d\xi^k.
\end{aligned}
\end{equation*}

By polar coordinates and Lemma \ref{Frostmandecay},
\begin{equation*}
	\begin{aligned}
		\prod_{j=1}^{k-1}\int_{r_j\approx 1}\int_{O(d)}|\widehat{\mu}(r_j\theta\xi^j)|^2\,d\theta\,dr_j\lesssim \prod_{j=1}^{k-1}2^{m_j(-d)}\int_{|\eta|\approx 2^{m_j}}|\widehat{\mu}(\eta)|^2\,d\eta\lesssim_\epsilon \prod_{j=1}^{k-1} 2^{m_j(-\dH(E)+\epsilon)}
	\end{aligned}
\end{equation*}

Therefore
\begin{equation*}
	\begin{aligned}\mathcal{M} &\lesssim_\epsilon \sum_{m_k} \sum_{m_j> m_k} 2^{m_k(-\frac{d+2\dH(E)-2}{4}+\epsilon)}\cdot \prod_{j=1}^{k-1}2^{m_j(-\dH(E)+\epsilon)}\prod_{j=1}^k\left( \int_{|\xi^j|\approx 2^{m_j}} |\widehat{\mu}(\xi^j)|^2\,d\xi^j\right)\\&\lesssim_\epsilon \sum_{m_k}\sum_{m_j> m_k}2^{m_k(-\frac{d+2\dH(E)-2}{4}+\epsilon)}\cdot\left(\prod_{j=1}^{k-1} 2^{m_j(-\dH(E)+\epsilon)}\right)\cdot \left(\prod_{j=1}^{k}2^{m_j(d-\dH(E)+\epsilon)}\right)\\&=\ \sum_{m_k}2^{m_k(\frac{3d-6\dH(E)+2}{4}+\epsilon)} \left(\sum_{m_j> m_k}\prod_{j=1}^{k-1}2^{m_j(d-2\dH(E)+\epsilon)}\right).
	\end{aligned}
\end{equation*}

Since $\dH(E)>\frac{d}{2}$, 
$$\sum_{m_j> m_k}\prod_{j=1}^{k-1}2^{m_j(d-2\dH(E)+\epsilon)}\approx 2^{m_k (k-1) (d-2\dH(E)+\epsilon)}.  $$
Hence
$$\mathcal{M}\lesssim_\epsilon \sum_{m_k}2^{m_k(\frac{3d-6\dH(E)+2}{4}+\epsilon)}\cdot2^{m_k k (d-2\dH(E)+\epsilon)}=\sum_{m_k}2^{m_k(\frac{(4k-1)d-(8k-2)\dH(E)+2}{4}+\epsilon)}, $$
which is finite if $\dH(E)>\frac{d}{2}+\frac{1}{4k-1}$, as desired.

\section{Proof of Theorem \ref{sum-prod}}
For any $x=(x_1, x_2)\in\R^2$, denote $x^\perp=(x_2, -x_1)$ and $E^\perp=\{x^\perp:x\in E\}$. Since $E\rightarrow E^\perp$ does not change its Hausdorff dimension, we may work on
$$E\cdot(F+H)^\perp$$
without loss of generality. Notice for all $x,y\neq 0$, $x\cdot y^\perp$ is the (signed) area of the parallelogram generated by $x,y$, and $x\cdot y^\perp=x'\cdot y'^\perp$ if and only if there exists $g\in SL_2(\R)$ such that $x'=gx$, $y'=gy$. Therefore Theorem \ref{main} applies.

With $\Phi(x,y)=x\cdot y^\perp$, $G=SL_2(\R)$, $E_1=E$, $E_2=F+H$, $\mu_1=\mu_E$, $\mu_2=\mu_F*\mu_H$, \eqref{gengenMat} becomes
\begin{equation*}
\begin{aligned}
&\int\left(\int_{\R^d} \mu_1^\e(x)\,\mu_1^\e(gx)\,dx\right)\left(\int_{\R^d} \mu_2^\e(x)\,\mu_2^\e(gx)\,dx\right)\,d\lambda_{SL_2(\R)}(g)\\=&C\int\left(\int_{\R^d} \widehat{\mu_1}(\xi)\widehat{\phi}(\e\xi)\,\overline{\widehat{\mu_1}(g \xi)}\,\overline{\widehat{\phi}(\e g\xi)}\,d\xi\right)\left(\int_{\R^d} \widehat{\mu_2}(\eta)\widehat{\phi}(\e\eta)\,\overline{\widehat{\mu_2}(g \eta)}\,\overline{\widehat{\phi}(\e g\eta)}\,d\eta\right)\,d\lambda_{SL_2(\R)}(g).
\end{aligned}
\end{equation*}
Since $E_1, E_2$ are both compact, we may restrict $\lambda_{SL_2(\R)}$ on a compact subset of $SL_2(\R)$. We first reduces it to the case $|\xi|\approx |\eta|$.

\begin{lemma}\label{fastdecay}
	Suppose $|x|\approx |y|\approx |x\cdot y^\perp|\approx 1$ for any $x\in supp(\mu_1)$, $y\in supp(\mu_2)$. Then for any $\psi\in C_0^\infty (SL_2(\R))$,
	$$\left|\int_{SL_2(\R)} \widehat{\mu_1}(g \xi)\,\widehat{\mu_2}(g \eta)\,\psi(g)d\lambda_{SL_2(\R)}(g)\right|\lesssim_N \max\{|\xi|, |\eta|\}^{-N}$$
	unless $|\xi|^2\approx |\eta|^2\approx |\xi\cdot\eta^\perp|$.
\end{lemma}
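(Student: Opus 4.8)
The plan is to treat the integral over $SL_2(\R)$ as an oscillatory integral in $g$ and apply non-stationary phase. Writing $\widehat{\mu_1}(g\xi)=\int e^{-2\pi i (g\xi)\cdot u}\,d\mu_1(u)$ and $\widehat{\mu_2}(g\eta)=\int e^{-2\pi i (g\eta)\cdot v}\,d\mu_2(v)$, the quantity in question becomes
\begin{equation*}
\int\int\left(\int_{SL_2(\R)} e^{-2\pi i\bigl[(g\xi)\cdot u + (g\eta)\cdot v\bigr]}\,\psi(g)\,d\lambda_{SL_2(\R)}(g)\right)d\mu_1(u)\,d\mu_2(v),
\end{equation*}
so it suffices to bound the inner integral by $C_N\max\{|\xi|,|\eta|\}^{-N}$ uniformly in $u\in\mathrm{supp}(\mu_1)$, $v\in\mathrm{supp}(\mu_2)$, under the stated hypotheses, whenever we are \emph{not} in the excluded regime. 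Parametrize a compact neighborhood of the identity in $SL_2(\R)$ by coordinates $s=(s_1,s_2,s_3)$ (the group is three-dimensional) with $g=g(s)$; the phase is $\varphi(s)=(g(s)\xi)\cdot u+(g(s)\eta)\cdot v$, which is linear in $(\xi,\eta)$ and smooth in $s$. I would compute $\nabla_s\varphi$ and show that its size is $\gtrsim\max\{|\xi|,|\eta|\}$ on the support of $\psi$ unless the excluded relation $|\xi|^2\approx|\eta|^2\approx|\xi\cdot\eta^\perp|$ holds; then repeated integration by parts in $s$ (the standard non-stationary phase lemma, gaining a factor $|\nabla_s\varphi|^{-1}$ each time, with the smooth cutoff $\psi$ absorbing the boundary terms) yields the $\max\{|\xi|,|\eta|\}^{-N}$ decay.

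The heart of the matter is the gradient lower bound. The action of $\mathfrak{sl}_2(\R)$ on $\R^2$ is spanned by three matrices $X_1,X_2,X_3$ (e.g. $\mathrm{diag}(1,-1)$, and the two nilpotent generators), so $\partial_{s_i}\varphi|_{g} = (g X_i g^{-1}\cdot g\xi)\cdot u + (g X_i g^{-1}\cdot g\eta)\cdot v$; absorbing $g$ into $\xi,\eta,u,v$ (using $|g\xi|\approx|\xi|$ etc. on the compact support and that $g$ preserves the symplectic form, so $g\xi\cdot(g\eta)^\perp=\xi\cdot\eta^\perp$), it is enough to show that the three numbers $(X_i\xi)\cdot u+(X_i\eta)\cdot v$, $i=1,2,3$, cannot all be $o(\max\{|\xi|,|\eta|\})$ unless $|\xi|^2\approx|\eta|^2\approx|\xi\cdot\eta^\perp|$. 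Since $\{X_i\xi\}$ and $\{X_i\eta\}$ each span directions controlled by $|\xi|$ and $|\eta|$ respectively — indeed $X_1\xi,X_2\xi,X_3\xi$ together recover $\xi$ and $\xi^\perp$ up to scalars of size $|\xi|$ — the vanishing of all three linear forms forces a relation among $u,v,\xi,\eta$; chasing it through (and using $|u|\approx|v|\approx|u\cdot v^\perp|\approx1$, together with the fact that $\mathrm{supp}(\mu_1),\mathrm{supp}(\mu_2)$ have been normalized so every such $u,v$ occurs) pins $|\xi|,|\eta|,|\xi\cdot\eta^\perp|$ to the excluded comparable regime. I would make this explicit by choosing the basis of $\mathfrak{sl}_2$ adapted to the vectors $u,u^\perp$, so the three conditions read roughly as $\xi\cdot u$-type, $\xi\cdot u^\perp$-type, and a mixed term all being small, from which $|\xi|\lesssim|\eta|$ follows and then the symplectic identity gives the comparability.

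The main obstacle I anticipate is precisely verifying this linear-algebra dichotomy cleanly: one must show that for \emph{every} admissible $(u,v)$ the gradient is large off the bad set, with constants uniform over the compact pieces of $SL_2(\R)$, $\mathrm{supp}(\mu_1)$, $\mathrm{supp}(\mu_2)$ — a genuine but elementary computation that is easy to botch if the basis of $\mathfrak{sl}_2(\R)$ is chosen carelessly. A secondary point is bookkeeping the $g$-dependence: one should fix the cutoff $\psi$ supported where all the $\approx 1$ normalizations hold and where $g$ and $g^{-1}$ have bounded entries, so that replacing $(\xi,\eta,u,v)$ by their $g$-transforms costs only harmless bounded factors. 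Once the gradient bound is in hand, the integration-by-parts step is completely routine, and integrating the resulting bound against the probability measures $d\mu_1\,d\mu_2$ preserves it, giving the claim.
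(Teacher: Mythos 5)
Your proposal follows essentially the same route as the paper: expand $\widehat{\mu_1},\widehat{\mu_2}$ to get an oscillatory integral over $SL_2(\R)$ with phase linear in $(\xi,\eta)$, parametrize a compact piece of the group by three coordinates (the paper uses the Iwasawa decomposition $SL_2(\R)=KP$ with coordinates $(a,b,\theta)$, where your Lie-algebra basis is a cosmetic variant), reduce to criticality at the identity by conjugation, and show the critical-point equations force $g_0\xi=-tv^\perp$, $g_0\eta=tu^\perp$, hence $|\xi|^2\approx|\eta|^2\approx|\xi\cdot\eta^\perp|$, with non-stationary phase giving the rapid decay otherwise. The linear-algebra verification you flag as the remaining work is exactly the explicit computation of $\phi'_a,\phi'_b,\phi'_\theta$ carried out in the paper.
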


We leave the proof to Section \ref{pffastdecay}. Now it suffices to show 
\begin{equation*}
\begin{aligned}
\Lambda_j:= \iint_{|\xi|\approx|\eta|\approx 2^j}&|\widehat{\mu_1}(\xi)|\,|\widehat{\mu_2}(\eta)|\left(\int_{SL_2(\R)} |\widehat{\mu_1}(g \xi)|\,|\widehat{\mu_2}(g \eta)|\,\psi(g) d\lambda_{SL_2(\R)}(g)\right)\,d\xi\,d\eta \\ = \iint_{|\xi|\approx|\eta|\approx 2^j}&|\widehat{\mu_E}(\xi)|\,|\widehat{\mu_F}(\eta)|\,|\widehat{\mu_H}(\eta)|\\&\left(\int_{SL_2(\R)} |\widehat{\mu_E}(g \xi)|\,|\widehat{\mu_F}(g \eta)|\,|\widehat{\mu_{H}}(g \eta)|\,\psi(g) d\lambda_{SL_2(\R)}(g)\right)\,d\xi\,d\eta
\end{aligned}
\end{equation*}
is summable in $j$.
\vskip.125in
We integrate $d\xi$ first. For $g\in supp(\psi)$, $|g\xi|\approx |\xi|$. By Cauchy-Schwartz and Lemma \ref{Frostmandecay},
\begin{equation*}
\begin{aligned}
\int_{|\xi|\approx 2^j} |\widehat{\mu_E}(\xi)|\,|\widehat{\mu_E}(g \xi)|\,d\xi & \leq \left(\int_{|\xi|\approx 2^j}|\widehat{\mu_E}(\xi)|^2\,d\xi\right)^\frac{1}{2}\left(\int_{|\xi|\approx 2^j}|\widehat{\mu_E}(g\xi)|^2\,d\xi\right)^\frac{1}{2}\\ &\approx  \int_{|\xi|\approx 2^j}|\widehat{\mu_E}(\xi)|^2\,d\xi\\&\lesssim_\epsilon  2^{j(2-\dH(E)+\epsilon)}.
\end{aligned}
\end{equation*}
Therefore
$$
\Lambda_j\lesssim_\epsilon 2^{j(2-\dH(E)+\epsilon)}\cdot\int_{|\eta|\approx 2^j}|\widehat{\mu_F}(\eta)|\,|\widehat{\mu_H}(\eta)|\left(\int_{SL_2(\R)} |\widehat{\mu_F}(g \eta)|\,|\widehat{\mu_H}(g \eta)|\,d\lambda_{SL_2(\R)}(g)\right)\,d\eta$$

Then we need the following lemma, whose proof will be given in Section \ref{pfavgsl2}.

\begin{lemma}\label{avgsl2}
	Suppose $E\subset\R^d$ and $\mu_E$ satisfies \eqref{Frostmanmeasure}. Then for any $\psi\in C_0^\infty (SL_2(\R))$, 
$$\int |\widehat{\mu_E}(g\xi)|^2\,\psi(g)d\lambda_{SL_2(\R)}(g)\lesssim_\epsilon |\xi|^{-\dH(E)+\epsilon}. $$
\end{lemma}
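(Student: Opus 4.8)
Lemma \ref{avgsl2} is the technical heart of the $SL_2(\R)$ application: it says that averaging $|\widehat{\mu_E}(g\xi)|^2$ over a compact piece of $SL_2(\R)$ gains a full power $|\xi|^{-\dH(E)+\epsilon}$, i.e. behaves like a single spherical average but for the larger group. The plan is to unwind $|\widehat{\mu_E}(g\xi)|^2 = \iint e^{2\pi i (x-y)\cdot g\xi}\,d\mu_E(x)\,d\mu_E(y)$, interchange the $g$-integral with the $x,y$-integrals (Fubini, everything is nonnegative after inserting the cutoff $\psi$), and reduce matters to a stationary-phase / oscillatory-integral bound for
$$I(x-y,\xi) = \int_{SL_2(\R)} e^{2\pi i (x-y)\cdot g\xi}\,\psi(g)\,d\lambda_{SL_2(\R)}(g).$$
The key point is that the map $g\mapsto g\xi$, for fixed $\xi\neq 0$, is a submersion onto a neighborhood of $\xi$ in $\R^2\setminus\{0\}$ (since the orbit of $SL_2(\R)$ on $\R^2\setminus\{0\}$ is transitive and $2$-dimensional), so pushing $\psi\,d\lambda_{SL_2(\R)}$ forward under $g\mapsto g\xi$ gives a measure on $\R^2$ which is $|\xi|^{-2}$ times a smooth bump of width $\approx|\xi|$ supported near $\xi$; hence $I(v,\xi)$ is essentially $|\xi|^{-2}$ times the Fourier transform of a bump at scale $|\xi|$, giving $|I(v,\xi)| \lesssim_N |\xi|^{-2}(1+|\xi|\,|v|)^{-N}$ together with the trivial bound $|I(v,\xi)|\lesssim |\xi|^{-2}$ away from... wait, one must be a little careful: the fiber $\{g: g\xi = \eta\}$ is a coset of the stabilizer, which is noncompact (a copy of $\R$), but $\psi$ cuts it down to a compact piece, so the pushforward density is finite and smooth. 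Then
$$\int |\widehat{\mu_E}(g\xi)|^2\psi(g)\,d\lambda(g) = \iint I(x-y,\xi)\,d\mu_E(x)\,d\mu_E(y) \lesssim_N |\xi|^{-2}\iint (1+|\xi||x-y|)^{-N}\,d\mu_E(x)\,d\mu_E(y),$$
and the last integral is $\lesssim_\epsilon |\xi|^{-(d-\dH(E)+\epsilon)}\cdot$(something) — actually, decomposing $|x-y|$ dyadically and using the Frostman bound $\mu_E(B(x,r))\lesssim r^{\dH(E)-\epsilon}$, one gets $\iint(1+|\xi||x-y|)^{-N}d\mu_E\,d\mu_E \lesssim_\epsilon |\xi|^{-\dH(E)+\epsilon}$ provided $N$ is large enough relative to $d$. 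Combining, $\int|\widehat{\mu_E}(g\xi)|^2\psi\,d\lambda \lesssim_\epsilon |\xi|^{-2}\cdot|\xi|^{-\dH(E)+\epsilon}$, which is even better than claimed; so the stated bound $|\xi|^{-\dH(E)+\epsilon}$ is more than covered. (Here $d=2$, so $|\xi|^{-2}\le 1$ for $|\xi|\gtrsim 1$, which is the relevant range.)

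Concretely, I would carry out the steps in this order. First, reduce to $|\xi|$ large (for $|\xi|\lesssim 1$ the left side is $\lesssim\|\psi\|_1$ trivially since $|\widehat{\mu_E}|\le 1$, and $|\xi|^{-\dH(E)+\epsilon}\gtrsim 1$ there). Second, expand the square, apply Fubini, and define the oscillatory integral $I(v,\xi)$ over $g$. Third, change variables $\eta = g\xi$: pick coordinates on $SL_2(\R)$ adapted to the orbit map, so that $g = g(\eta, s)$ with $\eta\in\R^2$ ranging near $\xi$ and $s$ parametrizing the stabilizer fiber; the Jacobian of $g\mapsto g\xi$ contributes $|\xi|^{-2}$ (homogeneity: $g(r\xi) = r\,g(\xi)$-type scaling) and $\psi$ keeps the $s$-integral compact and smooth. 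Fourth, conclude $I(v,\xi) = |\xi|^{-2}\widehat{\Psi_\xi}(v)$ where $\Psi_\xi$ is a $C_0^\infty$ bump adapted to the ball $B(\xi, c|\xi|)$ with derivative bounds at scale $|\xi|$, hence $|I(v,\xi)|\lesssim_N |\xi|^{-2}(1+|\xi||v|)^{-N}$. Fifth, integrate this against $d\mu_E(x)\,d\mu_E(y)$, decompose dyadically in $|x-y|\sim 2^{-\ell}$, use the Frostman bound, and sum.

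The main obstacle is the change of variables in Step 3: making the submersion $g\mapsto g\xi$ precise and tracking how the cutoff $\psi$ interacts with the noncompact stabilizer so that the pushforward is a genuine smooth density with the right scaling in $|\xi|$. One has to verify that the stabilizer of a nonzero vector in $\R^2$ under $SL_2(\R)$ is a one-parameter unipotent subgroup (conjugate to $\{\begin{pmatrix}1 & t\\ 0 & 1\end{pmatrix}\}$), check that $\psi$ restricted to each fiber has uniformly bounded integral and that the resulting density depends smoothly (with controlled derivatives) on the base point, and handle the homogeneity scaling correctly — these are the points where a naive argument could lose a power of $|\xi|$ or pick up a spurious blow-up near the boundary of $\mathrm{supp}\,\psi$. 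Everything after that (the dyadic Frostman estimate in Step 5) is routine and exactly parallels the proof of Lemma \ref{Frostmandecay}.
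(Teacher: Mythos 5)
Your route is genuinely different from the paper's. The paper stays entirely on the frequency side: it writes the Haar measure in Iwasawa coordinates $g=kp$, observes that on $\mathrm{supp}\,\psi$ the two-dimensional factor $P$ carries $\xi$ onto a region of diameter $\lesssim|\xi|$ with Jacobian $\xi_2^2/a^2\gtrsim|\xi|^2$ (after splitting into the cases $|\xi_2|\ge|\xi_1|$ and $|\xi_1|\ge|\xi_2|$, which use the two opposite triangular subgroups), and thus dominates the group average by $|\xi|^{-2}\int_{|\eta|\lesssim|\xi|}|\widehat{\mu_E}(\eta)|^2\,d\eta\lesssim_\epsilon|\xi|^{-\dH(E)+\epsilon}$ via Lemma \ref{Frostmandecay}. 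You instead pass to the physical side, pushing $\psi\,d\lambda$ forward under the orbit map and pairing the resulting kernel against $d\mu_E\times d\mu_E$. The two arguments are Plancherel-dual (the paper's Lemma \ref{Frostmandecay} is itself proved by exactly your Step 5), and the Iwasawa decomposition is precisely the device that disposes of your ``main obstacle'': the compact factor $K$ is averaged trivially, while $P$ is two-dimensional with generically trivial stabilizer, so the orbit map restricted to $P$ is already a local diffeomorphism and no fiber integration over a noncompact stabilizer is ever needed.

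One concrete correction: your Step 4 bound $|I(v,\xi)|\lesssim_N|\xi|^{-2}(1+|\xi|\,|v|)^{-N}$ is off by a factor of $|\xi|^{2}$. At $v=0$ you have $I(0,\xi)=\int\psi\,d\lambda\approx 1$, not $\approx|\xi|^{-2}$: the pushforward density is $\approx|\xi|^{-2}$ but is spread over a set of area $\approx|\xi|^{2}$, so its Fourier transform has size $O(1)$. The correct statement is $|I(v,\xi)|\lesssim_N(1+|\xi|\,|v|)^{-N}$, and combined with your (correct) dyadic Frostman estimate in Step 5 this yields exactly $|\xi|^{-\dH(E)+\epsilon}$ --- the lemma as stated, not the improvement by $|\xi|^{-2}$ that you announce. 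That stronger bound is false in general: the group average over a compact piece of $SL_2(\R)$ is comparable to the ball average $|\xi|^{-2}\int_{|\eta|\lesssim|\xi|}|\widehat{\mu}|^2\,d\eta$, which for reasonable measures of dimension $s$ is $\approx|\xi|^{-s}$ from below as well. With this normalization fixed, your argument is a valid alternative proof of the lemma.
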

\vskip.125in
By Cauchy-Schwartz and Lemma \ref{Frostmandecay}, \ref{avgsl2}, it follows that
\begin{equation*}
	\begin{aligned}
		\Lambda_j&\lesssim_\epsilon\, 2^{j(2-\dH(E)+\epsilon)}\int_{|\eta|\approx 2^j}|\widehat{\mu_F}(\eta)|\,|\widehat{\mu_H}(\eta)|\,\left(\int |\widehat{\mu_F}(g \eta)|^2\,\psi d\lambda(g)\right)^\frac{1}{2}\,\left(\int |\widehat{\mu_H}(g \eta)|^2\,\psi d\lambda(g)\right)^\frac{1}{2}\,d\eta\\&\lesssim_\epsilon \,2^{j(2-\dH(E)+\epsilon)}\,2^{j\frac{-\dH(F)+\epsilon}{2}}\,2^{j\frac{-\dH(H)+\epsilon}{2}}\int_{|\eta|\approx 2^j}|\widehat{\mu_F}(\eta)|\,|\widehat{\mu_H}(\eta)|\,d\eta\\&\lesssim_\epsilon \,2^{j(2-\dH(E)+\epsilon)}\,2^{j\frac{-\dH(F)+\epsilon}{2}}\,2^{j\frac{-\dH(H)+\epsilon}{2}}\left(\int_{|\eta|\approx 2^j}|\widehat{\mu_F}(\eta)|^2\,d\eta\right)^\frac{1}{2}\left(\int_{|\eta|\approx 2^j}|\widehat{\mu_H}(\eta)|^2\,d\eta\right)^\frac{1}{2}\\&\lesssim_\epsilon \,2^{j(2-\dH(E)+\epsilon)}\,2^{j\frac{-\dH(F)+\epsilon}{2}}\,2^{j\frac{-\dH(H)+\epsilon}{2}}\,2^{j\frac{2-\dH(F)+\epsilon}{2}}\,2^{j\frac{2-\dH(H)+\epsilon}{2}}\\&=\,2^{j(4-\dH(E)-\dH(F)-\dH(H)+2\epsilon)},
	\end{aligned}
\end{equation*}
which is summable whenever $\dH(E)+\dH(F)+\dH(H)>4$ and $\epsilon$ is small enough.
\vskip.25in
\section{Proof of Lemma \ref{fastdecay}}\label{pffastdecay}
We need to estimate
\begin{equation*}
\begin{aligned}
&\int_{SL_2(\R)} \widehat{\mu_1}(g \xi)\,\widehat{\mu_2}(g \eta)\,\psi(g) d\lambda_{SL_2(\R)}(g)\\=&\iint\left(\int_{SL_2} e^{-2\pi i(x\cdot g\xi+y\cdot g\eta)}\,\psi(g) d\lambda_{SL_2(\R)}(g)\right)d\mu_1(x)\,d\mu_2(y).
\end{aligned}
\end{equation*}
 
By the Iwasawa decomposition of $SL_2(\R)$ (see, e.g. \cite{Lan85}), $SL_2(\R)=KP$
and 
\begin{equation}\label{decint}
\int_{SL_2(\R)}f(g)d\lambda_{SL_2(\R)}(g)=\int_P\int_K f(kp)d\lambda_K(k)d\lambda_P(p),
\end{equation}
where $K$ is the orthogonal group $O(2)$,
$$P=\left\{\begin{pmatrix}a&b\\0&\frac{1}{a}\end{pmatrix}:a>0,b\in\R \right\}$$ and
$\lambda_K,\lambda_P$ are right Haar measures on $K,P$
respectively. It is also known, by the uniqueness of the Haar measure,
\begin{equation}\label{decmea}
d\lambda_P=da\,db
\end{equation}
 up to a multiplication by a constant.
\vskip.125in
We shall show that if $|\xi|^2\approx|\eta|^2\approx|\xi\cdot\eta^\perp|$ does not hold, then
$$\left|\int_{SL_2} e^{-2\pi i(x\cdot g\xi+y\cdot g\eta)}\,\psi(g) d\lambda_{SL_2(\R)}(g)\right|\lesssim_N \max\{|\xi|,|\eta|\}^{-N}. $$
\vskip.125in
Denote the phase function as
$$\phi=x\cdot g\xi+y\cdot g\eta.$$    
Apply Iwasawa decomposition,
$$x\cdot g\xi=(a\xi_1+b\xi_2)(x_1\cos\theta +x_2\sin\theta)+\frac{\xi_2}{a}(-x_1\sin\theta+x_2\cos\theta),$$
$$y\cdot g\eta=(a\eta_1+b\eta_2)(y_1\cos\theta+y_2\sin\theta)+\frac{\eta_2}{a}(-y_1\sin\theta+y_2\cos\theta).$$

Thus
\begin{equation}\label{nabla}
\begin{aligned}
\phi'_a=&\xi_1(x_1\cos\theta+x_2\sin\theta)-\frac{\xi_2}{a^2}(-x_1\sin\theta+x_2\cos\theta)\\&+\eta_1(y_1\cos\theta-y_2\sin\theta)+\frac{\eta_2}{a^2}(-y_1\sin\theta+y_2\cos\theta),\\
\phi'_b=&\xi_2(x_1\cos\theta
+x_2\sin\theta)+\eta_2(y_1\cos\theta+y_2\sin\theta),\\
\phi'_\theta=&(a\xi_1+b\xi_2)(-x_1\sin\theta +x_2\cos\theta)+\frac{\xi_2}{a}(-x_1\cos\theta-x_2\sin\theta)\\&+(a\eta_1+b\eta_2)(-y_1\sin\theta+y_2\cos\theta)+\frac{\eta_2}{a}(-y_1\cos\theta-y_2\sin\theta).
\end{aligned}
\end{equation}

If $g=id$ is a critical point, then $\nabla\phi$ vanishes at $(a,b,\theta)=(1,0,0)$, i.e.,
\begin{equation}
  \begin{aligned}
    \phi'_a=&x_1\xi_1-x_2\xi_2+y_1\eta_1-y_2\eta_2=0\\\phi'_b=&x_1\xi_2+y_1\eta_2=0\\\phi'_\theta+\phi'_b=&
    x_2\xi_1+y_2\eta_1=0,
  \end{aligned}
\end{equation}
which implies there exists $t\in\R$ such that
$$\xi=-ty^\perp,\ \ \ \eta=tx^\perp.$$ 

Generally, if $g_0\in supp (\psi)$ is a critical point of $\phi$, one can easily see that $g=id$ must be a critical point of $x\cdot g(g_0\xi)+y\cdot g(g_0)\eta$. This means there exists $t\in\R$ such that 
$$g_0\xi=-ty^\perp,\ \ \ g_0\eta=tx^\perp.$$

Since $\psi$ has compact support and $|x|\approx |y|\approx |x\cdot y^\perp|\approx 1$ for any $x\in supp(\mu_1)$, $y\in supp(\mu_2)$, the discussion above shows that $|\nabla\phi|$ could vanish only if $|\xi|^2\approx|\eta|^2\approx|\xi\cdot\eta^\perp|$. In other cases $|\nabla\phi|\gtrsim\max\{|\xi|, |\eta|\}$ and Lemma \ref{fastdecay} follows by integration by parts.
\vskip.25in

\section{Proof of Lemma \ref{avgsl2}}\label{pfavgsl2}
Let $\xi=(\xi_1,\xi_2)$ and first assume $|\xi_2|\geq|\xi_1|.$ 
\vskip.125in
As above, we apply the Iwasawa decomposition of $SL_2(\R)$, $SL_2(\R)=KP$
and 
\begin{equation}\label{decint2}
\int_{SL_2(\R)}f(g)d\lambda_{SL_2(\R)}(g)=\int_K\int_Pf(kp)d\lambda_K(k)d\lambda_P(p),
\end{equation}
where $K$ is the orthogonal $O(2)$, 
$$P=\left\{\begin{pmatrix}a&b\\0&\frac{1}{a}\end{pmatrix}:a>0,b\in\R \right\}$$ and
$$
d\lambda_P=da\,db$$
up to a multiplication by a constant.
\vskip.125in
Topologically $SL_2(\R)$ is homeomorphic to $K\times P$ and $P$ is
homeomorphic to $\R^+\times\R$. So on $supp(\psi)$ we may assume $C^{-1}\leq a\leq C,|b|\leq C$ for some $1<C<\infty$. It follows that
\begin{equation*}
    \int_{SL_2(\R)} \left|\widehat{\mu_E}(g\xi)\right|^2\,\psi(g) d\lambda_{SL_2(\R)}(g)=\int_{O(2)}\int_{-C}^C\int_{C^{-1}}^C \left|\widehat{\mu_E}\left(k\cdot\left(a\xi_1+b\xi_2, \frac{\xi_2}{a}\right) \right)\right|^2da\,db\,d\lambda_{O(2)}(k).
\end{equation*}

Since $K=O(2)$ is compact and $\lambda_K$ is a probability measure, it
suffices to show 
$$ \int_{-C}^C\int_{C^{-1}}^C \left|\widehat{\mu_E}\left(k\cdot \left(a\xi_1,b\xi_1+\frac{\xi_2}{a}\right)\right)\right|^2 da\,db\lesssim|\xi|^{-\dH(E)+\epsilon}.$$
\vskip.125in

Change variablves $u=a\xi_1+b\xi_2$, $v=\frac{\xi_2}{a}$. The Jacobian equals
$$\frac{\partial (u,v)}{\partial(a,b)}= \det\begin{pmatrix} \xi_1&\xi_2\\-\frac{\xi_2}{a^2}&0\end{pmatrix} = \frac{|\xi_2|^2}{a^2}\gtrsim |\xi|^2$$
and therefore
\begin{equation}
\begin{aligned}
  \int_{-C}^C\int_{C^{-1}}^C \left|\widehat{\mu_E}\left(k\cdot \left(a\xi_1+b\xi_2, \frac{\xi_2}{a}\right)\right)\right|^2da\,db&\lesssim\frac{1}{|\xi|^2}\iint_{\{|(u,v)|\lesssim|\xi|\}}|\widehat{\mu_E}(k\cdot(u,v))|^2\,du\, dv\\&= \frac{1}{|\xi|^2}\iint_{\{|(u,v)|\lesssim|\xi|\}}|\widehat{\mu_E}(u,v)|^2\,du\, dv\\&\lesssim|\xi|^{-\dH(E)+\epsilon},
\end{aligned}
\end{equation} 
where the last inequality follows from Lemma \ref{Frostmandecay}.

\vskip.125in
On the other hand, if $|\xi_1|\geq|\xi_2|$, we use another decomposition, $SL_2(\R)=KP'$, where $K=O(2)$ and
$$P'=\left\{\begin{pmatrix}a&0\\b&\frac{1}{a}\end{pmatrix}:a>0,b\in\R \right\}.$$ 
In this case 
$$d\lambda_P=\frac{1}{a^2}\,da\,db$$
and Lemma \ref{avgsl2} follows in a similar way.

\bibliographystyle{abbrv}
\bibliography{/Users/MacPro/Dropbox/Academic/paper/mybibtex.bib}

\end{document}